\documentclass{siamart250211}
\newtheorem{assumption}{Assumption}

\usepackage{lipsum}
\usepackage{amsfonts}
\usepackage{graphicx}
\usepackage{epstopdf}
\usepackage{algorithmic}
\ifpdf
  \DeclareGraphicsExtensions{.eps,.pdf,.png,.jpg}
\else
  \DeclareGraphicsExtensions{.eps}
\fi

\newsiamremark{remark}{Remark}
\newsiamremark{hypothesis}{Hypothesis}
\crefname{hypothesis}{Hypothesis}{Hypotheses}
\newsiamthm{claim}{Claim}
\newsiamremark{fact}{Fact}
\crefname{fact}{Fact}{Facts}

\headers{Convergence analysis of PPMD}{J. Guo, and D. Xiao}

\title{Convergence analysis of Parametric Probabilistic Manifold Decomposition\thanks{Submitted to the editors DATE.
\funding{ The authors acknowledge the support of the Fundamental Research Funds for the Central Universities, the Top Discipline Plan of Shanghai Universities-Class I, Shanghai Gaofeng Project for University Academic Program Development, National Key R$\&$D Program of China(NO. 2022YFE0208000, 2024YFC2816400, and 2024YFC2816401). This work is also supported in part by grants from the Shanghai Engineering Research Center for Blockchain Applications And Services (No. 19DZ2255100) and the
Shanghai Institute of Intelligent Science and Technology, Tongji University.}}}

\author{Jiaming Guo\thanks{Shanghai Research Institute for Intelligent Autonomous Systems, Tongji University, Shanghai 201210, CHINA. 
 (\email{2411955@tongji.edu.cn}).}
\and Dunhui Xiao\thanks{School of Mathematical Sciences,
Key Laboratory of Intelligent Computing and Applications (Ministry of Education), Tongji University, Shanghai 200092, CHINA. 
  (\email{xiaodunhui@tongji.edu.cn}).}
}

\usepackage{amsopn}

\usepackage{mathrsfs}
\usepackage{amssymb}

\ifpdf
\hypersetup{
  pdftitle={Convergence analysis of Parametric Probabilistic Manifold Decomposition},
  pdfauthor={Jiaming Guo, and Dunhui Xiao}
}
\fi

\begin{document}

\maketitle

\begin{abstract}
This paper presents a convergence analysis for a newly developed nonlinear model reduction method: parametric probabilistic manifold decomposition (PPMD)~\cite{guo2026parametric}. 
In addition, existing analyzes of nonlinear reduced order models typically treat
subspace reduction, manifold representation, regression, and nonlinear
reconstruction as separate components and often remain at the level of
discrete state vectors. To the best of our knowledge, no theory tracks the
complete error propagation in a data-dependent model whose basis, residual
geometry, spectral coordinates, parameter maps, and lifting operator are all
learned from the same numerical solution data. 

We develop a coupled perturbation analysis for the entire PPMD procedure.
A trajectory geometry induced by the spatial discretization and temporal
quadrature connects discrete trajectory vectors isometrically with the
corresponding PDE norm. Population spectral objects are introduced to align
the empirical residual coordinates and derive a uniform coordinate error
estimate, whose propagation through the Hilbert-valued kernel lifting
estimator is then quantified. Combining these results with the full order
discretization error, weighted low-rank approximation, parameter regression,
and residual representation defect yields deterministic and high-probability
trajectory error bounds and consistency in probability in the continuous PDE
trajectory space. The theory identifies how the principal errors interact
and which components limit the accuracy of the nonlinear reduced order model.
\end{abstract}

\begin{keywords}
Parametric Probabilistic Manifold Decomposition, parametric model reduction, error analysis and consistency, incompressible Navier-Stokes equations
\end{keywords}

\begin{MSCcodes}
37M05
\end{MSCcodes}

\section{Introduction}
\label{sec:introduction}

Parametrized incompressible Navier-Stokes problems arise in computational settings that require repeated solution evaluations throughout a prescribed parameter domain. Typical examples include design optimization, uncertainty quantification, parameter identification, and control. Although stable finite element and time discretization methods for incompressible flow are well developed under appropriate regularity assumptions \cite{heywood1982finite,heywood1988finite,heywood1990finite,
liu2007convergence}, repeatedly solving the full discretized model remains computationally expensive. For every new parameter value, one must solve a large discrete system over the entire time interval of interest. The resulting cost can become prohibitive when hundreds or thousands of evaluations are required. This motivates the use of reduced order models that approximate the dependence of the complete solution trajectory on the physical or model parameters.

Projection based reduced order models provide a standard approach to these computationally intensive problems\cite{benner2015survey}. Proper orthogonal decomposition combined with Galerkin projection is a popular method to construct a linear approximation space from snapshots of the full discretized model \cite{gunzburger2017ensemble,luo2009mixed,chaturantabut2012state,
rubino2020numerical,garcia2023pod}. The method is effective when the
parametric solution set is well represented by a linear space of modest
dimension. Their accuracy may get worse when parameter variations shift or deform the main flow structures, in which case a larger number of basis vectors may be needed, and the computational advantage of the reduction is diminished\cite{peherstorfer2020model,rim2023manifold}.

Nonlinear manifold methods instead introduce coordinates adapted to the 
geometry of the snapshot set. Representative techniques include geodesic
distance approximation on neighborhood graphs, kernel embeddings, and
spectral constructions
\cite{tenenbaum2000global,coifman2006diffusion,
wormell2021spectral,dunson2021spectral,calder2022lipschitz,
arias2019unconstrained}. When coupled with regression and reconstruction, these techniques can describe solution sets that are poorly approximated by a single linear space\cite{peng2014online,tencer2021tailored}. Their analysis in the context of a discretized partial differential equation is more delicate because the residual geometry, regression targets, and reconstruction errors should be measured consistently with the spatial and temporal discretization. In addition, the approximation space, graph operator, spectral coordinates, and regression maps are usually
constructed from the same training samples and are therefore statistically dependent.

This work develops an error analysis and consistency theory for parametric
probabilistic manifold decomposition (PPMD)
\cite{guo2026parametric}, a nonlinear reduced order model for complete
parameterized solution trajectories over a fixed time interval. Building on
probabilistic manifold decomposition
\cite{guo2026nonlinear}, PPMD combines a weighted low-rank approximation of
the dominant trajectory component with a nonlinear residual representation
based on spectral coordinates and kernel lifting. The central question is
whether this multistage, data-dependent construction yields a convergent
approximation of the continuous parametric trajectory map, rather than an
autonomous reduced system for time advancement.

This question cannot be answered by applying separate estimates to the
individual components. The weighted basis determines the residual samples,
which determine the graph operator and its spectral coordinates; these
coordinates are then used both as regression targets and as inputs to the
lifting estimator. Thus, the approximation space, residual geometry,
spectral coordinates, parameter maps, and lifting operator are learned from
the same data and are statistically and geometrically dependent. Moreover,
the training trajectories are themselves obtained from a spatial and temporal
discretization of the underlying flow problem. Existing analyses of isolated
manifold, regression, or reconstruction components therefore do not directly
yield a trajectory-level convergence result for PPMD.

We first formulate PPMD in a geometry compatible with the underlying partial
differential equation. For each parameter, the recorded discrete states are
assembled into a trajectory vector whose inner product incorporates the
spatial discretization matrix and temporal quadrature weights. The associated
reconstruction operator preserves this norm exactly. This allows the reduced
model error to be combined directly with the spatial and temporal
discretization error of the full order solver
\cite{choi2019space,tenderini2024space}, rather than being measured only in
the Euclidean norm of concatenated coefficient vectors.

The main analytical difficulty arises from the nonlinear residual
representation. PPMD decomposes each normalized trajectory into a weighted
low-rank component and a residual correction described by spectral
coordinates of a normalized graph operator
\cite{tenenbaum2000global,coifman2006diffusion,
wormell2021spectral,dunson2021spectral,calder2022lipschitz,
arias2019unconstrained}. We introduce population comparison objects, match
the empirical and population modes, resolve their sign ambiguity, and derive
a uniform estimate for the learned residual coordinates. The analysis
separates graph-operator approximation from uniform eigenfunction
approximation because the spectral coordinates enter the subsequent
regression and reconstruction stages pointwise.

Residual trajectories are reconstructed by a Hilbert-valued kernel ridge
regression map
\cite{micchelli2005learning,caponnetto2007optimal,
li2024towards,celisse2021analyzing}. Since both its inputs and Gram matrix are
formed from empirical spectral coordinates, we quantify how coordinate
perturbations propagate through the lifting estimator. The dependence of the
linear and nonlinear coordinates on the parameter is treated either by
kernel ridge regression with vector outputs or, for ordered one-dimensional
samples, by parameter continuation followed by weighted smoothing splines
\cite{cox1984multivariate,ragozin1983error,
wahba1990spline,de2001calculation}. The corresponding estimates are stated
conditionally on the learned representation.

The resulting coupled analysis accounts for the full order discretization
error, weighted low-rank approximation, residual spectral-coordinate error,
lifting perturbation, parameter-regression error, and residual
representation defect. Their combination yields deterministic and
high-probability trajectory reconstruction bounds. Under explicit assumptions
on the flow discretization, residual geometry, spectral approximation,
kernel regression, spline stability, and representation accuracy, we further
establish consistency in probability in the continuous PDE trajectory space.
The analysis therefore explains how errors generated by the mutually
dependent components of PPMD interact in the final approximation and provides
a theoretical basis for the componentwise error diagnosis in the numerical
experiments.

The remainder of the paper is organized as follows.
Section~\ref{sec:ppmd_method} presents PPMD in the weighted trajectory
setting. Section~\ref{sec:ppmd_error_analysis} develops the component
estimates, reconstruction bounds, and consistency result. The subsequent
sections present the numerical experiments, followed by the conclusions and
limitations of the analysis.

\section{Parametric probabilistic manifold decomposition}
\label{sec:ppmd_method}

PPMD combines a weighted low-rank representation with nonlinear coordinates for the unresolved residual and a learned lifting from these coordinates to complete solution trajectories.

\subsection{Continuous and discrete trajectory spaces}
\label{subsec:trajectory_spaces}

Let \(\Omega\subset\mathbb R^d\), \(d=2\) or \(3\), be a bounded Lipschitz domain, let \(T>0\), and let \(\mathcal D\subset\mathbb R^p\) be compact.
\[
H:=\overline{\{\boldsymbol v\in C_0^\infty(\Omega)^d:
                 \nabla\!\cdot\boldsymbol v=0\}}^{\,L^2(\Omega)^d},
\quad
V:=\overline{\{\boldsymbol v\in C_0^\infty(\Omega)^d:
                 \nabla\!\cdot\boldsymbol v=0\}}^{\,H_0^1(\Omega)^d}.
\]
For a fixed \(\chi\ge0\), set
\begin{equation}
X_\chi:=
\begin{cases}
H, & \chi=0,\\
V, & \chi>0,
\end{cases}
\qquad
\mathcal X_T:=L^2(0,T;X_\chi),
\end{equation}
and equip \(\mathcal X_T\) with
\begin{equation}
\langle v,w\rangle_{\mathcal X_T}:=
\begin{cases}
\displaystyle\int_0^T(v,w)_{L^2(\Omega)}\,dt,
   & \chi=0,\\[1mm]
\displaystyle\int_0^T\!\left[
 (v,w)_{L^2(\Omega)} +
 \chi(\nabla v,\nabla w)_{L^2(\Omega)}
\right]dt,
   & \chi>0.
\end{cases}
\label{eq:continuous_trajectory_inner_product}
\end{equation}
This piecewise definition avoids requiring \(L^2\) gradients when
\(\chi=0\).  The finite-time solution map is
\begin{equation}
\mathcal S:\mathcal D\to\mathcal X_T,
\qquad
\mathcal S(\mu):=u(\cdot;\mu)|_{[0,T]}.
\end{equation}
We assume the parameter-independent initial value
\(u(0;\mu)=u_0\); it is omitted from the stacked data because it carries no
parametric variability and does not affect the \(L^2\)-in-time trajectory
norm.

Let \(V_h\subset V\) have basis
\(\{\zeta_\alpha\}_{\alpha=1}^{N_h}\), and let
\(0=t_0<t_1<\cdots<t_m=T\), with
\(\tau_n=t_n-t_{n-1}>0\) and \(\tau:=\max_{1\le n\le m}\tau_n\). Denote by \(u_h^n(\mu)\in V_h\) the fully discrete solution at \(t_n\), and by
\(\mathbf u^n(\mu)\in\mathbb R^{N_h}\) its coefficient vector in this
basis.  For \(N=mN_h\), define the stacked trajectory
\begin{equation}
\mathbf u_{h,\tau}(\mu):=
\operatorname{col}\{\mathbf u^1(\mu),\ldots,\mathbf u^m(\mu)\}
\in\mathbb R^N.
\end{equation}
The mass and stiffness matrices and their trajectory metric are
\begin{equation}
(M_h)_{\alpha\beta}:=(\zeta_\alpha,\zeta_\beta)_{L^2(\Omega)},
\quad
(A_h)_{\alpha\beta}:=(\nabla\zeta_\alpha,\nabla\zeta_\beta)_{L^2(\Omega)},
\quad
G_h:=M_h+\chi A_h.
\end{equation}
Because \(M_h\) is symmetric positive definite, \(A_h\) is positive
semidefinite, and \(\tau_n>0\), both \(G_h\) and
\begin{equation}
W_{h,\tau}:=\operatorname{diag}(\tau_1G_h,\ldots,\tau_mG_h)
\end{equation}
are symmetric positive definite.  Hence
\(W_{h,\tau}^{1/2}\) and \(W_{h,\tau}^{-1/2}\) are well defined.  We write
\(\langle a,b\rangle_{h,\tau}:=a^\top W_{h,\tau}b\) and
\(\|a\|_{h,\tau}:=\langle a,a\rangle_{h,\tau}^{1/2}\). 

To connect coefficient vectors with trajectories, define
\begin{equation}
\mathcal E_{h,\tau}:\mathbb R^N\to L^2(0,T;V_h)\subset\mathcal X_T,\quad (\mathcal E_{h,\tau}\mathbf v)(t):=v_h^n,
\quad t\in(t_{n-1},t_n], \quad n=1,\ldots,m.\label{eq:piecewise_constant_time_reconstruction}
\end{equation}
where the \(n\)th block of \(\mathbf v\) is the coefficient vector of
\(v_h^n\).  Consequently,
\begin{equation}
\langle\mathcal E_{h,\tau}a,\mathcal E_{h,\tau}b\rangle_{\mathcal X_T}
=\langle a,b\rangle_{h,\tau}.
\end{equation}

\subsection{Normalization and weighted low-rank representation}
\label{subsec:weighted_low_rank}

Let \(\{\mu_i\}_{i=1}^{n_s}\subset\mathcal D\) be the training parameters. The normalization is the invertible affine map
\begin{equation}
\mathcal N(v):=B(v-\mathbf m_{\mathcal N}),
\qquad
\mathcal N^{-1}(a):=B^{-1}a+\mathbf m_{\mathcal N},
\end{equation}
where \(B\in\mathbb R^{N\times N}\) is nonsingular. If
\(\mathbf m_{\mathcal N}\) or \(B\) is estimated from the training data,
all subsequent probability statements are understood conditionally on
these quantities. Along a discretization sequence, we assume
\begin{equation}
\|B^{-1}\|_{\mathcal L(\mathbb R^N,\|\cdot\|_{h,\tau})}
\le L_{\mathcal N^{-1}}<\infty
\end{equation}
with a uniform constant. Define
$\bar{\mathbf u}_{h,\tau}(\mu)
:=
\mathcal N\bigl(\mathbf u_{h,\tau}(\mu)\bigr)$, $\bar{\mathbf u}_i:=\mathcal N(\mathbf u_{h,\tau}(\mu_i))$ and
\(\bar U:=[\bar{\mathbf u}_1,\ldots,\bar{\mathbf u}_{n_s}]\).

The weighted snapshot matrix and its singular value decomposition are
\begin{equation}
\widetilde U:=W_{h,\tau}^{1/2}\bar U
             =\widetilde\Psi\Sigma\Theta^\top,
\qquad
\widetilde{\mathbf u}_i:=W_{h,\tau}^{1/2}\bar{\mathbf u}_i.
\label{eq:weighted_snapshot_matrix}
\end{equation}
Thus
\(\widetilde{\mathbf u}_i^\top\widetilde{\mathbf u}_j
=\langle\bar{\mathbf u}_i,\bar{\mathbf u}_j\rangle_{h,\tau}\).
Let \(\widetilde\Psi_r\) contain the first \(r\) columns of
\(\widetilde\Psi\), corresponding to the \(r\) largest singular values, and define
\begin{equation}
\Psi_r:=W_{h,\tau}^{-1/2}\widetilde\Psi_r,
\qquad
\Psi_r^\top W_{h,\tau}\Psi_r=I_r.
\end{equation}
The linear coordinates, weighted projector, and residual targets are
\begin{align}
z_i:=\Psi_r^\top W_{h,\tau}&\bar{\mathbf u}_i,
\qquad
P_r^{h,\tau}:=\Psi_r\Psi_r^\top W_{h,\tau},
\qquad
\mathbf r_i:=\bar{\mathbf u}_i-\Psi_rz_i, \label{eq:resdiual}\\
Z&:=[z_1,\ldots,z_{n_s}],
\qquad
R:=[\mathbf r_1,\ldots,\mathbf r_{n_s}],
\end{align}
Accordingly,
\(\bar{\mathbf u}_i=\Psi_rz_i+\mathbf r_i\). The unresolved component is retained as the residual target for the nonlinear correction.

\subsection{Residual manifold and spectral coordinates}
\label{subsec:residual_coordinates}

Let \(\mathcal R_{n_s}:=\{\mathbf r_i\}_{i=1}^{n_s}\). We construct a
connected \(k\)-nearest-neighbor graph using the weighted edge lengths
\(\|\mathbf r_i-\mathbf r_j\|_{h,\tau}\), and we denote its shortest-path
distance by \(d_G(\mathbf r_i,\mathbf r_j)\). If the initial graph is
disconnected, \(k\) is increased until connectivity is obtained; no samples
are discarded.

For \(\varepsilon_{\rm res}>0\), define
\begin{equation}
A_{ij}
:=
c_{\varepsilon_{\rm res}}
\exp\!\left[
-\frac{d_G(\mathbf r_i,\mathbf r_j)^2}
{\varepsilon_{\rm res}^2}
\right],
\qquad
d_i:=\sum_jA_{ij},
\qquad
\pi_i:=\frac{d_i}{\sum_jd_j},
\end{equation}
where the positive factor \(c_{\varepsilon_{\rm res}}\) cancels under
normalization. Set
\begin{equation}
D_A:=\operatorname{diag}(d_1,\ldots,d_{n_s}),\qquad
P_{\rm res}:=D_A^{-1}A,
\qquad
S_{\rm res}:=D_A^{-1/2}AD_A^{-1/2}.
\end{equation}
The two matrices are similar. Let
\begin{equation}
S_{\rm res}v_\ell=\vartheta_\ell v_\ell,
\qquad
v_\ell^\top v_k=\delta_{\ell k},
\qquad
1=\vartheta_0>\vartheta_1\ge\vartheta_2\ge\cdots,
\end{equation}
and define the stationary-measure normalized Markov eigenvectors by
\begin{equation}
\psi_\ell
:=
\left(\sum_jd_j\right)^{1/2}D_A^{-1/2}v_\ell,
\qquad
P_{\rm res}\psi_\ell=\vartheta_\ell\psi_\ell,
\qquad
\sum_i\pi_i\psi_\ell(i)\psi_k(i)=\delta_{\ell k}.\label{eq:markov}
\end{equation}
Connectivity and positivity of the affinity imply that the trivial pair
\((\vartheta_0,\psi_0)=(1,\mathbf1)\) is simple; it is excluded from the
coordinates. For \(t_{\rm res}\in\mathbb N\), \(t_{\rm res}\ge1\), define
\begin{equation}
\Phi:=[\phi_1,\ldots,\phi_{n_s}],
\qquad
\phi_i:=
\bigl(
\vartheta_1^{t_{\rm res}}\psi_1(i),\ldots,
\vartheta_q^{t_{\rm res}}\psi_q(i)
\bigr)^\top
\in\mathbb R^q.
\label{eq:empirical_residual_coordinates}
\end{equation}
The algorithm selects these empirical eigenpairs; only in the analysis are they matched with the isolated population spectral neighborhoods introduced in the residual spectral approximation assumption.

\subsection{Parametric coordinate maps}
\label{subsec:parameter_regression}

\subsubsection{Direct parameter regression}
\label{subsubsec:direct_parameter_regression}

For any \(p\ge1\), both latent maps can be learned directly by
vector-valued kernel ridge regression (KRR). Let
\(\mathcal H_{\mu,z}\) and \(\mathcal H_{\mu,\phi}\) be vector-valued Reproducing Kernel Hilbert Spaces (RKHSs) over \(\mathcal D\). We define
\begin{align}
\widehat z &:=
\arg\min_{f\in\mathcal H_{\mu,z}}
\left\{\frac1{n_s}\sum_{i=1}^{n_s}\|z_i-f(\mu_i)\|_2^2
+\lambda_{\mu,z}\|f\|_{\mathcal H_{\mu,z}}^2\right\},\\
\widehat\phi_{\rm p}&:=
\arg\min_{g\in\mathcal H_{\mu,\phi}}
\left\{\frac1{n_s}\sum_{i=1}^{n_s}\|\phi_i-g(\mu_i)\|_2^2
+\lambda_{\mu,\phi}\|g\|_{\mathcal H_{\mu,\phi}}^2\right\}.
\end{align}
The kernels and regularization parameters are selected using only the
training data. Each latent map is fitted in an RKHS with vector outputs. With the separable kernels used here, all coordinate components in a latent block share the same Gram matrix.

\subsubsection{Ordered one-dimensional continuation}
\label{subsubsec:one_dimensional_continuation}

For \(\mathcal D=[a,b]\) and ordered parameters
\(\mu_1<\cdots<\mu_{n_s}\), continuation followed by smoothing splines
provides an alternative to direct KRR. We assume a fixed continuation step;
for a nonuniform grid, the current parameter and step size must be included
as inputs to the continuation map. Let
\begin{equation}
d_z:=r,\qquad d_\phi:=q,\qquad
y_i^z:=z_i,\qquad y_i^\phi:=\phi_i.
\end{equation}
For \(\ell\in\{z,\phi\}\), define
\begin{equation}
\widehat T_\ell:=
\arg\min_{T\in\mathcal H_\ell}
\left\{
\frac1{n_s-1}\sum_{i=1}^{n_s-1}
\|y_{i+1}^\ell-T(y_i^\ell)\|_2^2
+
\lambda_\ell^{\rm cont}\|T\|_{\mathcal H_\ell}^2
\right\}.
\label{eq:latent_continuation_compact}
\end{equation}
Set \(\widetilde y_i^\ell:=y_i^\ell\) for \(i\le n_s\), and generate
$\widetilde y_{i+1}^\ell := \widehat T_\ell(\widetilde y_i^\ell)$ for $i=n_s,\ldots,N_p-1$. With
\begin{equation}
\omega_i^\ell:=
\begin{cases}
1, & i\le n_s,\\
\gamma_\ell, & i>n_s,
\end{cases}
\qquad
N_{\rm e}^\ell:=\sum_{i=1}^{N_p}\omega_i^\ell,\label{eq:weight}
\end{equation}
define
\begin{equation}
\widehat y_{\rm s}^{\,\ell}
:=
\arg\min_{s\in H^2(a,b;\mathbb R^{d_\ell})}
\left\{
\frac1{N_{\rm e}^\ell}
\sum_{i=1}^{N_p}
\omega_i^\ell
\|\widetilde y_i^\ell-s(\mu_i)\|_2^2
+
\alpha_\ell\int_a^b\|s''(\mu)\|_2^2\,d\mu
\right\}.
\label{eq:weighted_vector_spline_compact}
\end{equation}
Finally, set $\widehat z:=\widehat y_{\rm s}^{\,z}$, and $\widehat\phi_{\rm p}:=\widehat y_{\rm s}^{\,\phi}.$ We assume
\(0<\gamma_{\min}\le\gamma_\ell\le1\), where
\(\gamma_{\min}\) is independent of \(n_s\) along the approximation sequence.

\subsection{Residual lifting and reconstruction}
\label{subsec:residual_lifting}

Let \(\mathcal H_{\mathcal K}\) be a Hilbert-valued RKHS of maps
\(\mathbb R^q\to \mathbb R^N_{h,\tau}\) with $\mathbb R^N_{h,\tau}:=
(\mathbb R^N,\langle\cdot,\cdot\rangle_{h,\tau}).$ The empirical residual-lifting estimator is
\begin{equation}
\widehat{\mathcal K}_{\rm p}:=
\arg\min_{F\in\mathcal H_{\mathcal K}}
\left\{\frac1{n_s}\sum_{i=1}^{n_s}
\|\mathbf r_i-F(\phi_i)\|_{h,\tau}^2
+\lambda_{\mathcal K}\|F\|_{\mathcal H_{\mathcal K}}^2\right\}.
\label{eq:residual_lifting_krr}
\end{equation}
A convenient scalar input kernel is, for example,
\begin{equation}
k_{\mathcal K}(\phi,\phi')
=\bigl(\gamma_{\mathcal K}\phi^\top\phi'+c_{\mathcal K}\bigr)^{d_{\mathcal K}},
\qquad
\gamma_{\mathcal K}>0,\quad c_{\mathcal K}\ge0,\quad
d_{\mathcal K}\in\mathbb N.
\end{equation}

For either construction of the parametric coordinate maps, the normalized
trajectory reconstructed by PPMD is
\begin{equation}
\widehat{\bar{\mathbf u}}_{r,h,\tau}(\mu)
:=
\Psi_r\widehat z(\mu)
+
\widehat{\mathcal K}_{\rm p}
\bigl(\widehat\phi_{\rm p}(\mu)\bigr).
\label{eq:ppmd_normalized_reconstruction}
\end{equation}
The coefficient trajectory in the original variables and the corresponding
finite-time reduced order model is
\begin{equation}
\widehat{\mathbf u}_{r,h,\tau}(\mu)
:=
\mathcal N^{-1}
\bigl(\widehat{\bar{\mathbf u}}_{r,h,\tau}(\mu)\bigr),
\qquad
\widehat{\mathcal S}_{r,h,\tau}(\mu)
:=
\mathcal E_{h,\tau}
\widehat{\mathbf u}_{r,h,\tau}(\mu)
\in\mathcal X_T.
\label{eq:ppmd_final_reconstruction}
\end{equation}
Thus, all low-rank projections, residual distances, lifting targets, and
reconstruction errors use the PDE-induced trajectory geometry, while the
residual coordinates are constructed from the associated weighted graph.

\section{Error analysis of PPMD}
\label{sec:ppmd_error_analysis}

This section uses all algorithmic objects defined in
Section~\ref{sec:ppmd_method} without redefining them.  In particular,
\(\mathcal X_T\), \(W_{h,\tau}\), \(\mathcal E_{h,\tau}\),
\(\mathcal N\), \(\Psi_r\), \(P_r^{h,\tau}\), the empirical residual
coordinates \(\phi_i\), the two parameter-regression constructions,
\(\widehat{\mathcal K}_{\rm p}\), and
\(\widehat{\mathcal S}_{r,h,\tau}\) retain the meanings assigned there.

The purpose of the present section is only to introduce the population
comparison objects and the assumptions needed to quantify their errors.
All random quantities are defined on the joint training experiment; in
particular, the bounds below account for the fact that the normalization, weighted basis, residual graph, and sign alignment are learned from the same training set.  In accordance with the convention in Section~\ref{subsec:weighted_low_rank}, when the normalization is estimated from data, each stated probability bound is assumed under the corresponding regular conditional law given the learned normalization quantities.

\subsection{Analysis targets and discretization error}
\label{subsec:error_analysis_targets}

We assume that the fully discrete solver satisfies the following estimate uniformly over the parameter domain:
\begin{equation}
\sup_{\mu\in\mathcal D}
\left\|
\mathcal S(\mu)
-\mathcal E_{h,\tau}\mathbf u_{h,\tau}(\mu)
\right\|_{\mathcal X_T}
\le
E_{\rm d}(h,\tau)
:=
C_{\rm disc}^{\ast}\bigl(h^{s_h}+\tau^{s_\tau}\bigr).
\label{eq:uniform_discretization_error}
\end{equation}
This is an assumption on the selected spatial and temporal discretizations\cite{heywood1982finite,heywood1988finite,shen1992error,
heywood1990finite}, not an automatic consequence of the PPMD construction.

For an arbitrary \(\mu\in\mathcal D\), extend only the training coordinate definitions from Section~\ref{subsec:weighted_low_rank} by setting
\begin{equation}
z_\star(\mu)
:=\Psi_r^\top W_{h,\tau}\bar{\mathbf u}_{h,\tau}(\mu),
\qquad
\mathbf r_\star(\mu)
:=(I-P_r^{h,\tau})\bar{\mathbf u}_{h,\tau}(\mu).
\label{eq:analysis_linear_and_residual_targets}
\end{equation}
The weighted orthonormality of \(\Psi_r\) gives the exact decomposition
\begin{equation}
\bar{\mathbf u}_{h,\tau}(\mu)
=\Psi_r z_\star(\mu)+\mathbf r_\star(\mu),
\qquad
\|\Psi_r a\|_{h,\tau}=\|a\|_2.
\label{eq:exact_weighted_decomposition}
\end{equation}
At a training parameter \(\mu_i\), these quantities coincide with \(z_i\)
and \(\mathbf r_i\) from the equation \eqref{eq:resdiual}.

\subsection{Population residual geometry and spectral approximation}
\label{subsec:residual_spectral_error}

Assume that the residuals lie in a tubular neighborhood
\(\mathcal U_{\mathcal M}\) of a compact
\(d_{\mathcal M}\)-dimensional manifold
\(\mathcal M_r^{h,\tau}\), on which the nearest-point projection
\[
\Pi_{\mathcal M}:\mathcal U_{\mathcal M}\to\mathcal M_r^{h,\tau}
\]
is uniquely defined. Write \(x_i:=\Pi_{\mathcal M}(\mathbf r_i)\).
Let \(dV\) be the Riemannian volume measure on
\(\mathcal M_r^{h,\tau}\), and assume that the \(x_i\) have density
\(\rho_{\mathcal M}\) with respect to \(dV\), bounded above and below by
positive constants. Let
\(\operatorname{dist}_{\mathcal M}\) denote the intrinsic distance of \(\mathcal M_r^{h,\tau}\). For \(\varepsilon=\varepsilon_{\rm res}\), define
\begin{align}
K_\varepsilon(x,y)
:=&
c_\varepsilon
\exp\!\left(
-\frac{\operatorname{dist}_{\mathcal M}(x,y)^2}{\varepsilon^2}
\right),
\qquad c_\varepsilon>0,\\
d_\varepsilon(x)
:=
\int_{\mathcal M_r^{h,\tau}}
K_\varepsilon(x,y)&\rho_{\mathcal M}(y)\,dV(y),
\qquad
Z_\varepsilon
:=
\int_{\mathcal M_r^{h,\tau}}
d_\varepsilon(x)\rho_{\mathcal M}(x)\,dV(x).
\label{eq:population_degree_and_normalizer}
\end{align}
We assume
\begin{equation}
0<d_{\min}(\varepsilon)
\le d_\varepsilon(x)
\le d_{\max}(\varepsilon)<\infty
\qquad
\text{for all }x\in\mathcal M_r^{h,\tau}.
\label{eq:population_degree_bounds}
\end{equation}
The population Markov operator and its symmetric conjugate are
\begin{align}
(\mathcal P_\varepsilon f)(x)
&:=
\frac{1}{d_\varepsilon(x)}
\int_{\mathcal M_r^{h,\tau}}
K_\varepsilon(x,y)f(y)\rho_{\mathcal M}(y)\,dV(y),
\label{eq:population_markov_operator}\\
(\mathcal S_\varepsilon f)(x)
&:=
\int_{\mathcal M_r^{h,\tau}}
\frac{K_\varepsilon(x,y)}
{\sqrt{d_\varepsilon(x)d_\varepsilon(y)}}
f(y)\rho_{\mathcal M}(y)\,dV(y).
\label{eq:population_symmetric_operator}
\end{align}
They are unitarily equivalent after changing from
\(\rho_{\mathcal M}dV\) to the stationary probability measure
\[
d\pi_\varepsilon(x)
:=
Z_\varepsilon^{-1}d_\varepsilon(x)\rho_{\mathcal M}(x)\,dV(x).
\]
Let
\((\vartheta_\ell^\star,v_\ell^\star)\), \(1\le\ell\le q\), be selected nontrivial simple and isolated eigenpairs of \(\mathcal S_\varepsilon\), with \(\|v_\ell^\star\|_{L^2_{\rho_{\mathcal M}}}=1\), and define the Markov eigenfunctions, normalized with respect to the stationary measure, by
\begin{equation}
\psi_\ell^\star(x)
:=
\frac{\sqrt{Z_\varepsilon}}{\sqrt{d_\varepsilon(x)}}v_\ell^\star(x).
\label{eq:population_markov_eigenfunction}
\end{equation}
Then
\(\mathcal P_\varepsilon\psi_\ell^\star
=\vartheta_\ell^\star\psi_\ell^\star\) and
\(\|\psi_\ell^\star\|_{L^2_{\pi_\varepsilon}}=1\). This normalization is the population counterpart of the empirical normalization in
Section~\ref{subsec:residual_coordinates}.

For comparison with \(S_{\rm res}\), introduce the sample matrix normalized with respect to the population degree function
\begin{equation}
\left(S_{\varepsilon,n_s}^{\circ}\right)_{ij}
:=
\frac{1}{n_s}
\frac{K_\varepsilon(x_i,x_j)}
{\sqrt{d_\varepsilon(x_i)d_\varepsilon(x_j)}},
\qquad 1\le i,j\le n_s.
\label{eq:population_normalized_sample_matrix}
\end{equation}
Let \(\vartheta_{\ell,n_s}^{\circ}\) be its eigenvalue matched with
\(\vartheta_\ell^\star\). We impose the following spectral approximation
assumption\cite{wormell2021spectral,dunson2021spectral,
calder2022lipschitz,arias2019unconstrained,garcia2020error}.

\begin{assumption}[Residual spectral approximation]
\label{ass:residual_spectral_approximation}
For every \(\delta_{\rm op}\in(0,1)\), there is an event
\(\mathcal A_{\rm op}\), with
\(\mathbb P(\mathcal A_{\rm op}^c)\le\delta_{\rm op}\), on which
\begin{align}
\|S_{\rm res}-S_{\varepsilon,n_s}^{\circ}\|_{\rm op}
&\le \mathfrak E_{\rm g}(\delta_{\rm op}),
\label{eq:residual_operator_approximation}\\
\max_{1\le\ell\le q}
\left|\vartheta_{\ell,n_s}^{\circ}-\vartheta_\ell^\star\right|
&\le \mathfrak E_{\rm s}(\delta_{\rm op}).
\label{eq:sample_population_eigenvalue_error}
\end{align}
The first term includes graph-distance distortion
\cite{arias2019unconstrained}, empirical degree error, sampling error, and deviations of the residuals from the underlying manifold. No additional bandwidth bias is required here because
\eqref{eq:residual_operator_approximation} compares objects constructed
with the same bandwidth. Set
\[
\mathfrak E_\vartheta(\delta_{\rm op})
:=
\mathfrak E_{\rm g}(\delta_{\rm op})
+\mathfrak E_{\rm s}(\delta_{\rm op}), \quad
\gamma_{\rm res}
:=
\min_{1\le\ell\le q}
\operatorname{dist}\!\left(
\vartheta_\ell^\star,
\operatorname{spec}(\mathcal S_\varepsilon)
\setminus\{\vartheta_\ell^\star\}
\right) > 0.
\]
For
\(I_\ell:=(\vartheta_\ell^\star-\gamma_{\rm res}/2,
\vartheta_\ell^\star+\gamma_{\rm res}/2)\), assume that
\(\mathfrak E_\vartheta(\delta_{\rm op}) < \frac{\gamma_{\rm res}}2\), that \(I_\ell\) contains
exactly one eigenvalue of \(S_{\varepsilon,n_s}^{\circ}\), counted with
multiplicity, and that this eigenvalue is separated from the rest of the
sampled spectrum by more than \(2\mathfrak E_{\rm g}(\delta_{\rm op})\). Consequently, \(I_\ell\) contains exactly
one eigenvalue of \(S_{\rm res}\). Under the common ordering of the empirical and population eigenpairs,
\begin{equation}
\left|\vartheta_\ell-\vartheta_\ell^\star\right|
\le \mathfrak E_\vartheta(\delta_{\rm op}),
\qquad 1\le\ell\le q,
\label{eq:matched_eigenvalue_error}
\end{equation}
where \(\vartheta_\ell\) is the empirical eigenvalue used in
\eqref{eq:empirical_residual_coordinates}.
\end{assumption}

The rank-one condition in Assumption~\ref{ass:residual_spectral_approximation}
is needed for unambiguous mode matching; a Hausdorff spectral bound alone does not exclude multiple empirical eigenvalues in the same spectral neighborhood\cite{davis1970rotation}.

\begin{assumption}[Uniform empirical eigenfunction approximation]
\label{ass:uniform_eigenfunction_approximation}
For every \(\delta_{\rm sp}\in(0,1)\), there is an event
\(\mathcal A_{\rm sp}\), with
\(\mathbb P(\mathcal A_{\rm sp}^c)\le\delta_{\rm sp}\), on which signs
\(s_\ell\in\{-1,1\}\) can be selected\cite{calder2022lipschitz,dunson2021spectral} so that the empirical Markov eigenfunctions \(\psi_\ell(i)\) defined in the equation \eqref{eq:markov} satisfy
\begin{equation}
\max_{\substack{1\le i\le n_s\\1\le\ell\le q}}
\left|
s_\ell\psi_\ell(i)-\psi_\ell^\star(x_i)
\right|
\le
\mathfrak E_{\psi,\infty}(\delta_{\rm sp}).
\label{eq:uniform_eigenfunction_error}
\end{equation}
\end{assumption}

Let
\[
J_\Phi:=\operatorname{diag}(s_1,\ldots,s_q),
\qquad
M_\psi:=
\max_{1\le\ell\le q}
\|\psi_\ell^\star\|_{L^\infty(\mathcal M_r^{h,\tau})} < \infty,
\]
and define only the two comparison coordinates
\begin{equation}
\phi_i^{\rm al}:=J_\Phi\phi_i,
\qquad
\phi_i^\star
:=
\left(
(\vartheta_1^\star)^{t_{\rm res}}\psi_1^\star(x_i),
\ldots,
(\vartheta_q^\star)^{t_{\rm res}}\psi_q^\star(x_i)
\right)^\top.
\label{eq:aligned_and_ideal_coordinates}
\end{equation}
The empirical coordinate \(\phi_i\) itself is not redefined here.

\begin{proposition}[Uniform residual-coordinate error]
\label{prop:uniform_residual_coordinate_error}
On \(\mathcal A_{\rm op}\cap\mathcal A_{\rm sp}\),
\begin{equation}
\max_{1\le i\le n_s}
\|\phi_i^{\rm al}-\phi_i^\star\|_2
\le
E_{\rm b}
:=
\sqrt q\left[
\mathfrak E_{\psi,\infty}(\delta_{\rm sp})
+t_{\rm res}M_\psi
\mathfrak E_\vartheta(\delta_{\rm op})
\right].
\label{eq:embedding_error}
\end{equation}
\end{proposition}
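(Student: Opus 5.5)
The estimate is a componentwise argument followed by passage from the $\ell^\infty$ to the Euclidean norm. Fix $i$ and $\ell\in\{1,\ldots,q\}$. Since $s_\ell^2=1$, the $\ell$th component of $\phi_i^{\rm al}-\phi_i^\star$ is
\[
s_\ell\vartheta_\ell^{t_{\rm res}}\psi_\ell(i)-(\vartheta_\ell^\star)^{t_{\rm res}}\psi_\ell^\star(x_i)
=\vartheta_\ell^{t_{\rm res}}\bigl(s_\ell\psi_\ell(i)-\psi_\ell^\star(x_i)\bigr)
+\bigl(\vartheta_\ell^{t_{\rm res}}-(\vartheta_\ell^\star)^{t_{\rm res}}\bigr)\psi_\ell^\star(x_i).
\]
We bound the two terms separately and then sum over $\ell$.

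\emph{First term.} The empirical eigenvalues satisfy $|\vartheta_\ell|\le1$, because $S_{\rm res}$ is similar to the row-stochastic matrix $P_{\rm res}$, whose spectral radius is one. Hence $|\vartheta_\ell|^{t_{\rm res}}\le1$. On $\mathcal A_{\rm sp}$, \eqref{eq:uniform_eigenfunction_error} then bounds the first term by $\mathfrak E_{\psi,\infty}(\delta_{\rm sp})$.

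\emph{Second term.} We use the factorization $a^t-b^t=(a-b)\sum_{k=0}^{t-1}a^kb^{t-1-k}$. This requires $|\vartheta_\ell|\le1$, already established, and $|\vartheta_\ell^\star|\le1$. The latter holds because $\mathcal S_\varepsilon$ is unitarily equivalent to the Markov operator $\mathcal P_\varepsilon$, which is a positivity-preserving averaging operator with $\mathcal P_\varepsilon 1=1$. It is therefore a contraction on $L^\infty$, and it is also a contraction on $L^2_{\pi_\varepsilon}$ by stationarity and self-adjointness. Consequently its eigenvalues lie in $[-1,1]$.

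It follows that $|\vartheta_\ell^{t_{\rm res}}-(\vartheta_\ell^\star)^{t_{\rm res}}|\le t_{\rm res}|\vartheta_\ell-\vartheta_\ell^\star|$. On $\mathcal A_{\rm op}$, the matched eigenvalue bound \eqref{eq:matched_eigenvalue_error} from Assumption~\ref{ass:residual_spectral_approximation} gives $|\vartheta_\ell-\vartheta_\ell^\star|\le\mathfrak E_\vartheta(\delta_{\rm op})$. Since $x_i\in\mathcal M_r^{h,\tau}$, we also have $|\psi_\ell^\star(x_i)|\le M_\psi$. The second term is therefore at most $t_{\rm res}M_\psi\mathfrak E_\vartheta(\delta_{\rm op})$.

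\emph{Conclusion.} Every component is bounded by $\mathfrak E_{\psi,\infty}(\delta_{\rm sp})+t_{\rm res}M_\psi\mathfrak E_\vartheta(\delta_{\rm op})$, uniformly in $i$ and $\ell$. Using $\|w\|_2\le\sqrt q\,\|w\|_\infty$ on $\mathbb R^q$ and taking the maximum over $i$ yields \eqref{eq:embedding_error} on $\mathcal A_{\rm op}\cap\mathcal A_{\rm sp}$.

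The only step requiring care is the justification of $|\vartheta|\le1$ for both the empirical and the population eigenvalues. The empirical case follows from stochasticity of $P_{\rm res}$. In the population case one must check that the spectrum of $\mathcal S_\varepsilon$ on $L^2_{\rho_{\mathcal M}}$ is contained in $[-1,1]$; this follows from the unitary equivalence with $\mathcal P_\varepsilon$ on $L^2_{\pi_\varepsilon}$, where $\mathcal P_\varepsilon$ is a self-adjoint Markov contraction. A second point to confirm is that the eigenvalue $\vartheta_\ell$ used in \eqref{eq:empirical_residual_coordinates} is the one matched in \eqref{eq:matched_eigenvalue_error}. This is guaranteed by the common-ordering clause of Assumption~\ref{ass:residual_spectral_approximation}.
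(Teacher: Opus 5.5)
Your proposal is correct and follows the paper's proof essentially line for line. It uses the same two-term componentwise splitting, the bound $|a^t-b^t|\le t|a-b|$ for $a,b\in[-1,1]$ (since both Markov operators are contractions), the matched eigenvalue and uniform eigenfunction estimates, and the passage $\|w\|_2\le\sqrt q\,\|w\|_\infty$; your justification of $|\vartheta_\ell^\star|\le1$ via self-adjointness and stationarity of $\mathcal P_\varepsilon$ on $L^2_{\pi_\varepsilon}$ just spells out a step the paper only asserts.
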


\begin{proof}
Fix \(i\in\{1,\ldots,n_s\}\) and
\(\ell\in\{1,\ldots,q\}\). By
\eqref{eq:aligned_and_ideal_coordinates}, the \(\ell\)th component of the
coordinate error can be decomposed as
\[
[\phi_i^{\rm al}-\phi_i^\star]_\ell
={}
\vartheta_\ell^{t_{\rm res}}
\bigl[s_\ell\psi_\ell(i)-\psi_\ell^\star(x_i)\bigr] 
+
\bigl[
\vartheta_\ell^{t_{\rm res}}
-(\vartheta_\ell^\star)^{t_{\rm res}}
\bigr]\psi_\ell^\star(x_i).
\]
Because the empirical and population Markov operators are contractions, $|\vartheta_\ell|\le1$ and $|\vartheta_\ell^\star|\le1$. Moreover, for any \(a,b\in[-1,1]\) and integer \(t\ge1\),
\[
|a^t-b^t|
=
|a-b|
\left|
\sum_{k=0}^{t-1}a^{t-1-k}b^k
\right|
\le t|a-b|.
\]
Hence, on \(\mathcal A_{\rm op}\cap\mathcal A_{\rm sp}\),
\[
|[\phi_i^{\rm al}-\phi_i^\star]_\ell|
\le
|s_\ell\psi_\ell(i)-\psi_\ell^\star(x_i)|
+t_{\rm res}
|\vartheta_\ell-\vartheta_\ell^\star|
|\psi_\ell^\star(x_i)| 
\le
\mathfrak E_{\psi,\infty}(\delta_{\rm sp})
+t_{\rm res}M_\psi
\mathfrak E_\vartheta(\delta_{\rm op}).
\]
This bound is uniform in both \(i\) and \(\ell\). Therefore,
\[
\|\phi_i^{\rm al}-\phi_i^\star\|_2
=
\left(
\sum_{\ell=1}^q
|[\phi_i^{\rm al}-\phi_i^\star]_\ell|^2
\right)^{1/2}\le \sqrt q\left[
\mathfrak E_{\psi,\infty}(\delta_{\rm sp})
+t_{\rm res}M_\psi
\mathfrak E_\vartheta(\delta_{\rm op})
\right]
=E_{\rm b}.
\]
Taking the maximum over \(1\le i\le n_s\) proves
\eqref{eq:embedding_error}.
\end{proof}

\subsection{Residual lifting error}
\label{subsec:residual_lifting_error}

Let \(k_{\mathcal K}\) be the scalar input kernel of the separable
Hilbert-valued RKHS\cite{micchelli2005learning,carmeli2006vector} \(\mathcal H_{\mathcal K}\) used in
\eqref{eq:residual_lifting_krr}. Assume that all ideal coordinates and all aligned learned coordinates evaluated on \(\mathcal D\), including their training values, belong to a compact set
\(\mathcal Z\subset\mathbb R^q\), and that
\begin{align}
\sup_{\phi\in\mathcal Z}k_{\mathcal K}(\phi,\phi)
&\le\kappa_{\mathcal K}^2,
\label{eq:lifting_kernel_bound}\\
\left\|
k_{\mathcal K}(\cdot,\phi)
-k_{\mathcal K}(\cdot,\eta)
\right\|_{\mathcal H_{k_{\mathcal K}}}
&\le L_{\rm f}\|\phi-\eta\|_2,
\qquad \phi,\eta\in\mathcal Z.
\label{eq:lifting_feature_lipschitz}
\end{align}
We also assume sign invariance,
\begin{equation}
k_{\mathcal K}(J\phi,J\eta)
=k_{\mathcal K}(\phi,\eta)
\label{eq:lifting_kernel_sign_invariance}
\end{equation}
for every diagonal sign matrix \(J\) relevant to the selected modes. The polynomial kernel displayed in the method section \eqref{subsec:residual_lifting} satisfies euqation \eqref{eq:lifting_kernel_sign_invariance}; on a compact set, it also satisfies equations \eqref{eq:lifting_kernel_bound}-\eqref{eq:lifting_feature_lipschitz}.

Let \(\nu_{\mathcal Z}\) be the ideal-coordinate probability measure and
define the scalar input covariance operator
\begin{equation}
(T_{\mathcal K}^{\rm in}f)(\cdot)
:=
\int_{\mathcal Z}
k_{\mathcal K}(\cdot,\phi)f(\phi)\,
d\nu_{\mathcal Z}(\phi),
\qquad
T_{\mathcal K}
:=
T_{\mathcal K}^{\rm in}
\otimes I_{\mathbb R^N_{h,\tau}}
:
\mathcal H_{\mathcal K}\to\mathcal H_{\mathcal K}.
\label{eq:lifting_covariance_operator}
\end{equation}
Fractional powers of \(T_{\mathcal K}\) are defined through this tensor
extension. Assume that a reference lifting \(\mathcal K_\star\) satisfies the source condition\cite{tuo2020improved}
\begin{equation}
\mathcal K_\star
=(T_{\mathcal K})^{\beta_{\mathcal K}}G_{\mathcal K},
\qquad
0<\beta_{\mathcal K}\le1,
\qquad
\|G_{\mathcal K}\|_{\mathcal H_{\mathcal K}}
\le B_{\mathcal K}.
\label{eq:lifting_source_condition}
\end{equation}

For analysis only, let \(\mathcal K_n^\circ\) denote the KRR estimator
obtained by replacing the empirical inputs \(\phi_i\) in
\eqref{eq:residual_lifting_krr} with the ideal inputs \(\phi_i^\star\),
while retaining the same residual targets:
\begin{equation}
\mathcal K_n^\circ
:=
\arg\min_{F\in\mathcal H_{\mathcal K}}
\left\{
\frac1{n_s}\sum_{i=1}^{n_s}
\|\mathbf r_i-F(\phi_i^\star)\|_{h,\tau}^2
+\lambda_{\mathcal K}\|F\|_{\mathcal H_{\mathcal K}}^2
\right\}.
\label{eq:ideal_coordinate_lifting_estimator}
\end{equation}
Its training set modeling mismatch is
\begin{equation}
E_{\rm l}
:=
\left[
\frac1{n_s}\sum_{i=1}^{n_s}
\|\mathbf r_i-\mathcal K_\star(\phi_i^\star)\|_{h,\tau}^2
\right]^{1/2}.
\label{eq:lifting_modeling_mismatch}
\end{equation}

\begin{assumption}[Fixed-design lifting estimate]
\label{ass:conditional_lifting_krr_oracle}
For every realized training set satisfying the preceding source and
regularity conditions,
\begin{equation}
\|\mathcal K_n^\circ-\mathcal K_\star\|_{\mathcal H_{\mathcal K}}
\le
C_{\mathcal K}
\left[
B_{\mathcal K}\lambda_{\mathcal K}^{\beta_{\mathcal K}}
+\frac{E_{\rm l}}{\sqrt{\lambda_{\mathcal K}}}
\right].
\label{eq:fixed_design_lifting_oracle}
\end{equation}
Any additional approximation term arising from the empirical covariance
operator or the use of a fixed training design in the selected KRR theorem is understood to be included in the right-hand side.
\end{assumption}

Define the aligned forms of the learned lifting map and the learned map from parameters to residual coordinates by
\begin{equation}
\widehat{\mathcal K}(y)
:=
\widehat{\mathcal K}_{\rm p}(J_\Phi y),
\qquad
\widehat\phi(\mu)
:=
J_\Phi\widehat\phi_{\rm p}(\mu).
\label{eq:aligned_lifting_and_parameter_map}
\end{equation}
Because \(J_\Phi^2=I_q\),
\begin{equation}
\widehat{\mathcal K}\bigl(\widehat\phi(\mu)\bigr)
=
\widehat{\mathcal K}_{\rm p}
\bigl(\widehat\phi_{\rm p}(\mu)\bigr).
\label{eq:alignment_preserves_reconstruction}
\end{equation}
Thus, alignment changes neither the algorithm nor its reconstruction.
Moreover, \eqref{eq:lifting_kernel_sign_invariance} implies that composition
with \(J_\Phi\) is an isometry of \(\mathcal H_{\mathcal K}\), so
\(\widehat{\mathcal K}\) is precisely the KRR estimator trained at the
aligned inputs \(\phi_i^{\rm al}\).

\begin{theorem}[Residual lifting error]
\label{thm:hilbert_lifting_error}
Under Proposition~\ref{prop:uniform_residual_coordinate_error} and
Assumption~\ref{ass:conditional_lifting_krr_oracle}, on
\(\mathcal A_{\rm op}\cap\mathcal A_{\rm sp}\),
\begin{equation}
\|\widehat{\mathcal K}-\mathcal K_\star\|_{\mathcal H_{\mathcal K}}
\le E_{\mathcal K},
\label{eq:residual_lifting_error}
\end{equation}
where
\begin{equation}
E_{\mathcal K}
:={}
C_{\mathcal K}
\left[
B_{\mathcal K}\lambda_{\mathcal K}^{\beta_{\mathcal K}}
+\frac{E_{\rm l}}{\sqrt{\lambda_{\mathcal K}}}
\right] +
R_{m}L_{\rm f}
\left[
\frac{E_{\rm b}}{\lambda_{\mathcal K}}
+\frac{2\kappa_{\mathcal K}^2E_{\rm b}}
{\lambda_{\mathcal K}^2}
\right], R_{m}:=\max_{1\le i\le n_s}\|\mathbf r_i\|_{h,\tau}.
\label{eq:explicit_lifting_error}
\end{equation}
\end{theorem}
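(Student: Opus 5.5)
We split the error with the triangle inequality,
\[
\|\widehat{\mathcal K}-\mathcal K_\star\|_{\mathcal H_{\mathcal K}}
\le
\|\widehat{\mathcal K}-\mathcal K_n^\circ\|_{\mathcal H_{\mathcal K}}
+\|\mathcal K_n^\circ-\mathcal K_\star\|_{\mathcal H_{\mathcal K}}.
\]
The second term is bounded directly by Assumption~\ref{ass:conditional_lifting_krr_oracle}, which yields the first bracket of $E_{\mathcal K}$. The remaining work is a perturbation bound for KRR with respect to its inputs. We use the fact stated before the theorem that $\widehat{\mathcal K}$ is exactly the KRR estimator with inputs $\phi_i^{\rm al}$, while $\mathcal K_n^\circ$ uses inputs $\phi_i^\star$, and both use the same targets $\mathbf r_i$ and the same $\lambda_{\mathcal K}$. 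By Proposition~\ref{prop:uniform_residual_coordinate_error}, on $\mathcal A_{\rm op}\cap\mathcal A_{\rm sp}$ we have $\|\phi_i^{\rm al}-\phi_i^\star\|_2\le E_{\rm b}$ for every $i$.

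For the perturbation term we write both estimators in operator form. For inputs $\{y_i\}$, define the sampling operator $S_y:\mathcal H_{\mathcal K}\to(\mathbb R^N_{h,\tau})^{n_s}$ by $S_yF=(F(y_i))_i$, with the normalized norm $\frac1{n_s}\sum_i\|\cdot\|_{h,\tau}^2$. Set $T_y:=S_y^*S_y$ and $g_y:=S_y^*\mathbf r=\frac1{n_s}\sum_i k_{\mathcal K}(\cdot,y_i)\mathbf r_i$. The estimator is then $F_y=(T_y+\lambda_{\mathcal K})^{-1}g_y$. Writing $a$ for the aligned inputs and $\star$ for the ideal inputs, we decompose
\[
F_a-F_\star=(T_a+\lambda)^{-1}(g_a-g_\star)
+\bigl[(T_a+\lambda)^{-1}-(T_\star+\lambda)^{-1}\bigr]g_\star .
\]
For the first piece, the feature Lipschitz bound \eqref{eq:lifting_feature_lipschitz} together with the tensor structure gives
\[
\|k(\cdot,\phi_i^{\rm al})\mathbf r_i-k(\cdot,\phi_i^\star)\mathbf r_i\|_{\mathcal H_{\mathcal K}}\le L_{\rm f}E_{\rm b}R_m .
\]
Hence $\|g_a-g_\star\|\le L_{\rm f}E_{\rm b}R_m$. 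Since $\|(T_a+\lambda)^{-1}\|\le\lambda^{-1}$, this piece contributes $R_mL_{\rm f}E_{\rm b}/\lambda_{\mathcal K}$.

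For the second piece we use the resolvent identity
\[
(T_a+\lambda)^{-1}-(T_\star+\lambda)^{-1}=(T_a+\lambda)^{-1}(T_\star-T_a)(T_\star+\lambda)^{-1}.
\]
This gives a bound of $\lambda^{-2}\|T_\star-T_a\|\,\|g_\star\|$. We have $\|g_\star\|\le\kappa_{\mathcal K}R_m$ by \eqref{eq:lifting_kernel_bound}. The operator $T_y=\frac1{n_s}\sum_i (k_{y_i}\otimes k_{y_i})\otimes I$ differs between the two input sets by rank-one terms. Each term satisfies
\[
\|k_a\otimes k_a-k_\star\otimes k_\star\|\le(\|k_a\|+\|k_\star\|)\|k_a-k_\star\|\le 2\kappa_{\mathcal K}L_{\rm f}E_{\rm b}.
\]
Combining these bounds gives $2\kappa_{\mathcal K}^2L_{\rm f}R_mE_{\rm b}/\lambda_{\mathcal K}^2$, which matches the second term in the bracket of \eqref{eq:explicit_lifting_error}. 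Note that this constant carries one factor of $\kappa$ from $\|g_\star\|$ and one from the rank-one bound.

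The main point requiring care is that all inputs must lie in $\mathcal Z$, so that $\kappa_{\mathcal K}$ and $L_{\rm f}$ apply to both $\phi_i^{\rm al}$ and $\phi_i^\star$; this is assumed. The tensor-product norms must also factor correctly: $\|k(\cdot,\phi)\mathbf v\|_{\mathcal H_{\mathcal K}}=\|k(\cdot,\phi)\|_{\mathcal H_{k_{\mathcal K}}}\|\mathbf v\|_{h,\tau}$, and $\|(A\otimes I)\|=\|A\|$. Both follow from the separable structure. Summing the three contributions yields $E_{\mathcal K}$.
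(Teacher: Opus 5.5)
Your proposal is correct and follows the paper's proof essentially step for step. You use the triangle inequality through $\mathcal K_n^\circ$, the operator form $(T_X+\lambda_{\mathcal K}I)^{-1}g_X$ of the estimator, the resolvent identity, and the three bounds $\|g_{X_{\rm al}}-g_{X_\star}\|\le L_{\rm f}E_{\rm b}R_m$, $\|T_{X_{\rm al}}-T_{X_\star}\|_{\rm op}\le 2\kappa_{\mathcal K}L_{\rm f}E_{\rm b}$ and $\|g_{X_\star}\|\le\kappa_{\mathcal K}R_m$, which are exactly the paper's ingredients; your explicit remarks on the normalized sampling operator and on all inputs lying in $\mathcal Z$ just make precise what the paper leaves implicit.
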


\begin{proof}
Write \(k_\phi:=k_{\mathcal K}(\cdot,\phi)\). For an input set
\(X=\{\xi_i\}_{i=1}^{n_s}\), define
\[
T_X
:=
\left(
\frac1{n_s}\sum_{i=1}^{n_s}
k_{\xi_i}\otimes k_{\xi_i}
\right)
\otimes I_{\mathbb R^N_{h,\tau}},
\qquad
g_X
:=
\frac1{n_s}\sum_{i=1}^{n_s}
k_{\xi_i}\otimes\mathbf r_i.
\]
The associated KRR solution is
\((T_X+\lambda_{\mathcal K}I)^{-1}g_X\). Let
\(X_{\rm al}:=\{\phi_i^{\rm al}\}\) and
\(X_\star:=\{\phi_i^\star\}\).  By
\eqref{eq:lifting_kernel_bound},
\eqref{eq:lifting_feature_lipschitz}, and
\eqref{eq:embedding_error},
\[
\|g_{X_{\rm al}}-g_{X_\star}\|
\le L_{\rm f}E_{\rm b}R_{m}, \quad \|T_{X_{\rm al}}-T_{X_\star}\|_{\rm op}
\le2\kappa_{\mathcal K}L_{\rm f}E_{\rm b}, \quad \|g_{X_\star}\|\le\kappa_{\mathcal K}R_{m}.
\]
Here, the second estimate follows from
\(k_a\otimes k_a-k_b\otimes k_b
=(k_a-k_b)\otimes k_a+k_b\otimes(k_a-k_b)\).

Set \(A:=T_{X_{\rm al}}+\lambda_{\mathcal K}I\) and
\(B:=T_{X_\star}+\lambda_{\mathcal K}I\). Using
\(A^{-1}-B^{-1}=A^{-1}(B-A)B^{-1}\) and
\(\|A^{-1}\|_{\rm op},\|B^{-1}\|_{\rm op}
\le\lambda_{\mathcal K}^{-1}\), we obtain
\begin{align*}
\|\widehat{\mathcal K}-\mathcal K_n^\circ\|
&\le
\|A^{-1}(g_{X_{\rm al}}-g_{X_\star})\|
+\|A^{-1}(T_{X_\star}-T_{X_{\rm al}})
B^{-1}g_{X_\star}\|\\
&\le
R_{m}L_{\rm f}
\left[
\frac{E_{\rm b}}{\lambda_{\mathcal K}}
+\frac{2\kappa_{\mathcal K}^2E_{\rm b}}
{\lambda_{\mathcal K}^2}
\right].
\end{align*}
Finally, the triangle inequality and
\eqref{eq:fixed_design_lifting_oracle} yield
\(\|\widehat{\mathcal K}-\mathcal K_\star\|_{\mathcal H_{\mathcal K}}
\le E_{\mathcal K}\), proving
\eqref{eq:residual_lifting_error}.
\end{proof}

\subsection{Ideal parametric coordinates and direct-KRR error}
\label{subsec:parameter_latent_error}

Assume that \(\mathbf r_\star(\mu)\in\mathcal U_{\mathcal M}\) for every
\(\mu\in\mathcal D\), and extend the ideal residual coordinate to the
parameter domain by
\begin{equation}
\phi_\star^\mu(\mu)
:=
\left(
(\vartheta_1^\star)^{t_{\rm res}}
\psi_1^\star\!\left(\Pi_{\mathcal M}\mathbf r_\star(\mu)\right),
\ldots,
(\vartheta_q^\star)^{t_{\rm res}}
\psi_q^\star\!\left(\Pi_{\mathcal M}\mathbf r_\star(\mu)\right)
\right)^\top.
\label{eq:ideal_parameter_residual_coordinate}
\end{equation}
In particular, \(\phi_\star^\mu(\mu_i)=\phi_i^\star\).

Let \(\rho_{\mathcal D}\) be the parameter sampling probability measure. For \(\ell\in\{z,\phi\}\), let \(k_{\mu,\ell}\) be the scalar kernel underlying the separable vector-valued RKHS used in
Section~\ref{subsubsec:direct_parameter_regression}, and define
\begin{align}
(T_{\mu,\ell}f)(\cdot)
&:=
\int_{\mathcal D}
k_{\mu,\ell}(\cdot,\nu)f(\nu)\,d\rho_{\mathcal D}(\nu),
\label{eq:parameter_covariance_operator}\\
\mathcal N_{\mu,\ell}(\lambda)
&:=
\operatorname{tr}
\left[
T_{\mu,\ell}(T_{\mu,\ell}+\lambda I)^{-1}
\right].
\label{eq:parameter_effective_dimension}
\end{align}
The operator \(T_{\mu,\ell}\) and its fractional powers act componentwise on the corresponding vector-valued RKHS. Set
\[
\ell_\star
:=
\begin{cases}
z_\star, & \ell=z,\\
\phi_\star^\mu, & \ell=\phi,
\end{cases}
\qquad
\widehat\ell
:=
\begin{cases}
\widehat z, & \ell=z,\\
J_\Phi\widehat\phi_{\rm p}, & \ell=\phi.
\end{cases}
\]
The corresponding aligned training labels are \(z_i\) and
\(J_\Phi\phi_i\). Their root mean square errors relative to the ideal labels are
\begin{equation}
E_{{\rm obs},z}=0,
\qquad
E_{{\rm obs},\phi}
:=
\left[
\frac1{n_s}\sum_{i=1}^{n_s}
\|\phi_i^{\rm al}-\phi_i^\star\|_2^2
\right]^{1/2}
\le E_{\rm b}.
\label{eq:parameter_label_errors}
\end{equation}

Assume the source conditions\cite{caponnetto2007optimal}
\begin{equation}
\ell_\star
=T_{\mu,\ell}^{\beta_\ell}g_\ell,
\qquad
0<\beta_\ell\le1,
\qquad
\|g_\ell\|_{\mathcal H_{\mu,\ell}}
\le B_\ell,
\qquad
\ell\in\{z,\phi\}.
\label{eq:parameter_source_condition}
\end{equation}
Because these targets depend on the representation learned from the same
training sample. We therefore state explicitly the conditional KRR estimate required for the parameter maps.

\begin{assumption}[Conditional KRR estimate for the parameter maps]
\label{ass:parameter_latent_krr_oracle}
For every \(\delta_\ell\in(0,1)\), there is an event
\(\mathcal A_{\mu,\ell}\), with
\(\mathbb P(\mathcal A_{\mu,\ell}^c)\le\delta_\ell\), on which
\begin{equation}
\|\widehat\ell-\ell_\star\|_{L^2_{\rho_{\mathcal D}}}
\le
E_{\mu,\ell}(\delta_\ell)
:=
C_\ell
\left[
B_\ell\lambda_{\mu,\ell}^{\beta_\ell}
+\sigma_\ell
\sqrt{
\frac{
\mathcal N_{\mu,\ell}(\lambda_{\mu,\ell})
+\log(2/\delta_\ell)}
{n_s\lambda_{\mu,\ell}}
}
+\frac{E_{{\rm obs},\ell}}
{\sqrt{\lambda_{\mu,\ell}}}
\right].
\label{eq:parameter_latent_oracle}
\end{equation}
Here \(\sigma_\ell\) denotes the stochastic scale entering the corresponding
KRR estimate, and \(\sigma_\ell\), \(\ell\in\{z,\phi\}\), are assumed to be
uniformly bounded. If a fixed-design result is used, the population effective
dimension in \eqref{eq:parameter_latent_oracle} may be replaced by its
empirical counterpart, together with the corresponding sampling term.
\end{assumption}

\begin{theorem}[Direct-KRR latent error]
\label{thm:parameter_latent_krr_error}
Under Assumption~\ref{ass:parameter_latent_krr_oracle},
\begin{equation}
\|\widehat z-z_\star\|_{L^2_{\rho_{\mathcal D}}}
\le E_{\mu,z}(\delta_z),
\qquad
\|\widehat\phi-\phi_\star^\mu\|_{L^2_{\rho_{\mathcal D}}}
\le E_{\mu,\phi}(\delta_\phi)
\label{eq:joint_direct_krr_latent_error}
\end{equation}
on an event with a failure probability of at most
\(\delta_z+\delta_\phi\).
\end{theorem}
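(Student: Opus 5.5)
The theorem follows from Assumption~\ref{ass:parameter_latent_krr_oracle} applied once to each latent block, combined with a union bound. No further analysis of the estimators is needed; the work is to check that the objects in the statement are the ones the assumption controls.

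\emph{Step 1: identify the objects.} For \(\ell=z\) the estimator is \(\widehat\ell=\widehat z\) and the target is \(\ell_\star=z_\star\). For \(\ell=\phi\) the estimator is \(\widehat\ell=J_\Phi\widehat\phi_{\rm p}\). By \eqref{eq:aligned_and_ideal_coordinate_lifting_and_parameter_map}, more precisely by \eqref{eq:aligned_lifting_and_parameter_map}, this is exactly \(\widehat\phi\), and the target is \(\phi_\star^\mu\).

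It is also worth recording that \(J_\Phi\widehat\phi_{\rm p}\) coincides with the vector-valued KRR estimator trained on the aligned labels \(J_\Phi\phi_i=\phi_i^{\rm al}\). The separable kernel makes the objective componentwise, and a sign flip of a component label flips the minimizer. Hence the label error relevant to the assumption is \(E_{{\rm obs},\phi}\) from \eqref{eq:parameter_label_errors}, and it is consistent with the assumption's formulation.

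\emph{Step 2: invoke the assumption.} For \(\ell=z\), the assumption with \(\delta_z\) gives an event \(\mathcal A_{\mu,z}\) with \(\mathbb P(\mathcal A_{\mu,z}^c)\le\delta_z\) on which \(\|\widehat z-z_\star\|_{L^2_{\rho_{\mathcal D}}}\le E_{\mu,z}(\delta_z)\). For \(\ell=\phi\), the assumption with \(\delta_\phi\) gives \(\mathcal A_{\mu,\phi}\) with \(\mathbb P(\mathcal A_{\mu,\phi}^c)\le\delta_\phi\) on which \(\|\widehat\phi-\phi_\star^\mu\|_{L^2_{\rho_{\mathcal D}}}\le E_{\mu,\phi}(\delta_\phi)\).

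\emph{Step 3: intersect.} On \(\mathcal A_{\mu,z}\cap\mathcal A_{\mu,\phi}\) both bounds in \eqref{eq:joint_direct_krr_latent_error} hold. The union bound gives
\[
\mathbb P\bigl((\mathcal A_{\mu,z}\cap\mathcal A_{\mu,\phi})^c\bigr)\le\mathbb P(\mathcal A_{\mu,z}^c)+\mathbb P(\mathcal A_{\mu,\phi}^c)\le\delta_z+\delta_\phi.
\]
No independence between the two events is needed. This matters because both regressions share the training parameters and the learned representation \(\Psi_r\) and \(J_\Phi\).

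\emph{Main obstacle.} The only delicate point is interpretive. The events and probabilities must be understood under the same (possibly conditional) law fixed in the convention of Section~\ref{subsec:weighted_low_rank}, since the targets \(z_\star\) and \(\phi_\star^\mu\) depend on the learned basis. I will note that Assumption~\ref{ass:parameter_latent_krr_oracle} is stated under that same convention, so the union bound applies within a single probability space.

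The bound \(E_{{\rm obs},\phi}\le E_{\rm b}\) holds on \(\mathcal A_{\rm op}\cap\mathcal A_{\rm sp}\) by Proposition~\ref{prop:uniform_residual_coordinate_error}. It is not required for the stated theorem, because \(E_{\mu,\phi}\) is expressed through \(E_{{\rm obs},\phi}\) itself. I will mention it only as a remark for later use.
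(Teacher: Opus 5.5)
Your proof is correct and is essentially the paper's proof. You apply Assumption~\ref{ass:parameter_latent_krr_oracle} separately for $\ell=z$ and for $\ell=\phi$ (with $\widehat\ell=J_\Phi\widehat\phi_{\rm p}=\widehat\phi$ by \eqref{eq:aligned_lifting_and_parameter_map}), observe that both bounds hold on $\mathcal A_{\mu,z}\cap\mathcal A_{\mu,\phi}$, and bound the complement by $\delta_z+\delta_\phi$ via the union bound, which needs no independence; your remarks on sign equivariance under the separable kernel and on the common conditional law are accurate but not needed for the argument.
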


\begin{proof}
Let
\(\mathcal A_\mu:=
\mathcal A_{\mu,z}\cap\mathcal A_{\mu,\phi}\).
By Assumption~\ref{ass:parameter_latent_krr_oracle}, both estimates in
\eqref{eq:joint_direct_krr_latent_error} hold simultaneously on
\(\mathcal A_\mu\). Moreover, without requiring independence,
\[
\mathbb P(\mathcal A_\mu^c)
=
\mathbb P\!\left(
\mathcal A_{\mu,z}^c\cup\mathcal A_{\mu,\phi}^c
\right)
\le
\mathbb P(\mathcal A_{\mu,z}^c)
+\mathbb P(\mathcal A_{\mu,\phi}^c)
\le\delta_z+\delta_\phi.
\]
Thus \(\mathbb P(\mathcal A_\mu)\ge1-\delta_z-\delta_\phi\), which proves
the claim.
\end{proof}

\subsection{Latent error for continuation with smoothing splines}
\label{subsec:continuation_spline_error}

Suppose that \(\mathcal D=[a,b]\) and that the latent coordinate maps are constructed by continuation followed by smoothing spline fitting, as
described in Section~\ref{subsubsec:one_dimensional_continuation}. Define
\[
Q_z:=I_r,
\qquad
Q_\phi:=J_\Phi,
\qquad
\widetilde y_i^{\,\ell,{\rm al}}
:=Q_\ell\widetilde y_i^\ell.
\]
Because the Euclidean data norm and the spline roughness penalty are
orthogonally invariant, \(Q_\ell\widehat y_{\rm s}^{\,\ell}\) is exactly
the spline fitted to the aligned labels
\(\widetilde y_i^{\,\ell,{\rm al}}\). Thus, this change of coordinates also leaves the PPMD reconstruction unchanged.

For \(i=1,\ldots,N_p\), set
\begin{equation}
e_i^\ell
:=
\widetilde y_i^{\,\ell,{\rm al}}-\ell_\star(\mu_i).
\label{eq:spline_label_error}
\end{equation}
Using the weights and \(N_{\rm e}^\ell\) already defined in the equation \eqref{eq:weight}, introduce
\begin{equation}
E_{{\rm h},\ell} :=
\left[
\frac1{N_{\rm e}^\ell}
\sum_{i=1}^{n_s}\|e_i^\ell\|_2^2
\right]^{1/2},\qquad E_{{\rm c},\ell}:=
\left[
\frac{\gamma_\ell}{N_{\rm e}^\ell}
\sum_{i=n_s+1}^{N_p}\|e_i^\ell\|_2^2
\right]^{1/2}.
\label{eq:continuation_label_error}
\end{equation}
The two index sets are disjoint; hence,
\begin{equation}
\left(E_{{\rm d},\ell}^{\rm s}\right)^2
:=
\frac1{N_{\rm e}^\ell}
\sum_{i=1}^{N_p}
\omega_i^\ell\|e_i^\ell\|_2^2
=
\left(E_{{\rm h},\ell}\right)^2
+\left(E_{{\rm c},\ell}\right)^2.
\label{eq:spline_data_error_decomposition}
\end{equation}
For the original high-fidelity labels,
\begin{equation}
E_{{\rm h},z}=0,
\qquad
E_{{\rm h},\phi}
\le
\left(\frac{n_s}{N_{\rm e}^{\phi}}\right)^{1/2}
E_{\rm b}.
\label{eq:high_fidelity_embedding_contribution}
\end{equation}

To make the accumulation of continuation error explicit, let
\(C_\ell^\Delta\) be the exact shift map in the aligned latent coordinates,
and let \(\widehat C_\ell^\Delta\) denote the aligned learned continuation
map. The exact shift map and the aligned learned continuation map satisfy
\[
C_\ell^\Delta\bigl(\ell_\star(\mu_i)\bigr)
=
\ell_\star(\mu_{i+1}),\qquad \widehat C_\ell^\Delta
:=
Q_\ell\widehat T_\ell Q_\ell^\top,
\qquad i=n_s,\ldots,N_p-1.
\]
If
\[
\operatorname{Lip}(C_\ell^\Delta)\le L_\ell,
\qquad
\sup_x
\|\widehat C_\ell^\Delta(x)-C_\ell^\Delta(x)\|_2
\le\eta_\ell,
\]
then successive continuation errors satisfy
\begin{equation}
\|e_{n_s+j}^\ell\|_2
\le
L_\ell^j\|e_{n_s}^\ell\|_2
+\eta_\ell\sum_{k=0}^{j-1}L_\ell^k,
\qquad
1\le j\le N_p-n_s.
\label{eq:continuation_recursion}
\end{equation}
For a uniformly bounded number of continuation steps, it is sufficient that \(\|e_{n_s}^\ell\|_2\to0\), \(\eta_\ell\to0\), and
\(\sup L_\ell<\infty\). If the number of continuation steps increases, it is sufficient that
\[
\|e_{n_s}^\ell\|_2\to0,\qquad
\eta_\ell\to0,\qquad
L_\ell\le L_\ast<1.
\]
In the neutral case \(L_\ell=1\), one requires
\(\|e_{n_s}^\ell\|_2\to0\) and
\((N_p-n_s)\eta_\ell\to0\). 

Let
\[
\widehat\ell_{\rm s}
:=
Q_\ell\widehat y_{\rm s}^{\,\ell},
\]
and let \(s_\ell^\circ\) be the spline produced by the same knots, weights,
and regularization parameter from exact labels
\(\ell_\star(\mu_i)\).  Assume the spline solution operator is stable:
\begin{equation}
\|\widehat\ell_{\rm s}-s_\ell^\circ\|_{L^2(a,b)}
\le
C_{{\rm t},\ell}E_{{\rm d},\ell}^{\rm s}.
\label{eq:spline_stability}
\end{equation}
Let
\[
h_\mu:=\max_{1\le i<N_p}(\mu_{i+1}-\mu_i).
\]

\begin{lemma}[Spline latent error]
\label{lem:inline_spline_latent_error}
If \(\ell_\star\in H^{m_\ell}(a,b;\mathbb R^{d_\ell})\) and the exact-data spline satisfies the standard approximation estimate of order \(m_\ell\)\cite{wahba1990spline,de2001calculation,cox1984multivariate}, then
\begin{equation}
\|\widehat\ell_{\rm s}-\ell_\star\|_{L^2(a,b)}
\le{}
C_{{\rm a},\ell}
\left[
h_\mu^{m_\ell}
\|\ell_\star\|_{H^{m_\ell}(a,b)}
+\alpha_\ell^{1/2}
\|\ell_\star\|_{H^2(a,b)}
\right]+
C_{{\rm t},\ell}E_{{\rm d},\ell}^{\rm s}.
\label{eq:spline_latent_error}
\end{equation}
\end{lemma}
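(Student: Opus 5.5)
The plan is to insert the exact-data spline \(s_\ell^\circ\) as an intermediate comparison object and split the error by the triangle inequality in \(L^2(a,b)\):
\[
\|\widehat\ell_{\rm s}-\ell_\star\|_{L^2(a,b)}
\le
\|\widehat\ell_{\rm s}-s_\ell^\circ\|_{L^2(a,b)}
+\|s_\ell^\circ-\ell_\star\|_{L^2(a,b)}.
\]
The first term is the data-perturbation contribution and the second is the deterministic approximation error of the smoothing spline with noise-free labels.

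\emph{Perturbation term.} This is handled directly by the assumed stability \eqref{eq:spline_stability}, which gives the bound \(C_{{\rm t},\ell}E_{{\rm d},\ell}^{\rm s}\). The only point to check is that the spline producing \(\widehat\ell_{\rm s}\) is trained on the aligned labels \(\widetilde y_i^{\,\ell,{\rm al}}\). The orthogonal invariance noted before \eqref{eq:spline_label_error} guarantees this. Hence the label discrepancy entering the stability estimate is exactly \(e_i^\ell\), weighted by \(\omega_i^\ell/N_{\rm e}^\ell\), which is \(E_{{\rm d},\ell}^{\rm s}\) by \eqref{eq:spline_data_error_decomposition}.

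\emph{Approximation term.} I would invoke the standard smoothing-spline estimate for exact data, applied componentwise. Each component of \(\ell_\star\) lies in \(H^{m_\ell}(a,b)\). Since the data norm and the roughness penalty are both sums over the \(d_\ell\) components, the vector spline decouples into \(d_\ell\) scalar weighted smoothing splines with common knots, weights and \(\alpha_\ell\). The classical estimates of Ragozin, Cox and Wahba then give, per component, a discretization part of order \(h_\mu^{m_\ell}\) times the \(H^{m_\ell}\) seminorm and a regularization bias of order \(\alpha_\ell^{1/2}\) times the \(H^2\) norm. The latter is obtained by comparing the spline functional at its minimizer with its value at an interpolant of \(\ell_\star\).

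Summing squares over components and taking square roots recovers the vector norms, with a constant \(C_{{\rm a},\ell}\) absorbing the weight bounds \(\gamma_{\min}\le\omega_i^\ell\le1\) and the mesh-quasi-uniformity constants. Adding the two contributions gives \eqref{eq:spline_latent_error}.

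\emph{Main obstacle.} The delicate step is the approximation term. The weights are nonuniform (\(\gamma_\ell\) on the continuation nodes), and the classical estimates are typically stated for uniform weights and quasi-uniform designs. I would handle this by noting that the weighted empirical norm is equivalent to the unweighted one with constants depending only on \(\gamma_{\min}\). Since \(\gamma_{\min}\) is independent of \(n_s\), the constant \(C_{{\rm a},\ell}\) stays uniform. Since the lemma's hypothesis explicitly grants the order-\(m_\ell\) approximation estimate, this reduces to citing it with the equivalence constant tracked.
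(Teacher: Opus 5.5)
Your proposal is correct and follows the paper's proof: insert the exact-data spline \(s_\ell^\circ\), apply the triangle inequality, bound the perturbation term by the stability assumption \eqref{eq:spline_stability}, and bound the approximation term by the granted order-\(m_\ell\) exact-data estimate, applied componentwise. Your extra discussion of nonuniform weights and mesh quasi-uniformity is not needed, since the lemma's hypothesis already supplies that estimate with the constant \(C_{{\rm a},\ell}\).
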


\begin{proof}
Let \(s_\ell^\circ\) denote the spline constructed with the same knots,
weights, and regularization parameter as \(\widehat\ell_{\rm s}\), but using
the exact values \(\ell_\star(\mu_i)\). By the triangle inequality,
the spline-stability estimate \eqref{eq:spline_stability}, and the assumed exact data approximation estimate,
\begin{align*}
\|\widehat\ell_{\rm s}-\ell_\star\|_{L^2(a,b)}
&\le
\|\widehat\ell_{\rm s}-s_\ell^\circ\|_{L^2(a,b)}
+\|s_\ell^\circ-\ell_\star\|_{L^2(a,b)}\\
&\le
C_{{\rm t},\ell}E_{{\rm d},\ell}^{\rm s}
+C_{{\rm a},\ell}
\left[
h_\mu^{m_\ell}\|\ell_\star\|_{H^{m_\ell}(a,b)}
+\alpha_\ell^{1/2}\|\ell_\star\|_{H^2(a,b)}
\right].
\end{align*}
The vector-valued estimate follows componentwise and therefore proves
\eqref{eq:spline_latent_error}.
\end{proof}

If
\(d\rho_{\mathcal D}(\mu)=w_{\mathcal D}(\mu)\,d\mu\) with
\(0\le w_{\mathcal D}\le c_\rho\) almost everywhere, define
\begin{equation}
E_{\ell,\rho}^{\rm s}
:=
c_\rho^{1/2}
\left\{
C_{{\rm a},\ell}
\left[
h_\mu^{m_\ell}\|\ell_\star\|_{H^{m_\ell}}
+\alpha_\ell^{1/2}\|\ell_\star\|_{H^2}
\right]
+C_{{\rm t},\ell}E_{{\rm d},\ell}^{\rm s}
\right\}.
\label{eq:spline_parameter_measure_error}
\end{equation}
Then
\begin{equation}
\|\widehat\ell_{\rm s}-\ell_\star\|
_{L^2_{\rho_{\mathcal D}}(\mathcal D)}
\le E_{\ell,\rho}^{\rm s}.
\label{eq:spline_latent_rho_error}
\end{equation}

\subsection{Deterministic and high probability PPMD bounds}
\label{subsec:ppmd_main_error}

The out of sample residual modeling defect is
\begin{equation}
E_{\rm m}(\mu)
:=
\left\|
\mathbf r_\star(\mu)
-\mathcal K_\star\bigl(\phi_\star^\mu(\mu)\bigr)
\right\|_{h,\tau},
\qquad
E_{{\rm m},\rho}
:=
\|E_{\rm m}\|_{L^2_{\rho_{\mathcal D}}(\mathcal D)}.
\label{eq:out_of_sample_modeling_defect}
\end{equation}
It includes residual coordinate truncation, possible noninjectivity of the spectral coordinates, and any off manifold modeling error. Assume that
\(\mathcal K_\star\) is uniformly Lipschitz on \(\mathcal Z\):
\begin{equation}
\|\mathcal K_\star(\phi)-\mathcal K_\star(\eta)\|_{h,\tau}
\le L_{\mathcal K}\|\phi-\eta\|_2.
\label{eq:reference_lifting_lipschitz}
\end{equation}

\begin{theorem}[Deterministic PPMD error]
\label{thm:deterministic_ppmd_error}
On any realization for which
\eqref{eq:residual_lifting_error} holds, the pointwise reconstruction error
satisfies
\begin{equation}
\begin{split}
\left\|
\mathcal S(\mu)-\widehat{\mathcal S}_{r,h,\tau}(\mu)
\right\|&_{\mathcal X_T}
\le{}
E_{\rm d}(h,\tau)\\
+
L_{\mathcal N^{-1}}&
\left[E_{\rm m}(\mu)
+\kappa_{\mathcal K}E_{\mathcal K}
+\|\widehat z(\mu)-z_\star(\mu)\|_2
+L_{\mathcal K}
\|\widehat\phi(\mu)-\phi_\star^\mu(\mu)\|_2\right].
\end{split}
\label{eq:deterministic_ppmd_error}
\end{equation}
Consequently,
\begin{equation}
\begin{split}
\left\|
\mathcal S-\widehat{\mathcal S}_{r,h,\tau}
\right\|&_{L^2_{\rho_{\mathcal D}}(\mathcal D;\mathcal X_T)}
\le{}
E_{\rm d}(h,\tau)\\
+
L_{\mathcal N^{-1}}&
\left[
E_{{\rm m},\rho}
+\kappa_{\mathcal K}E_{\mathcal K}
+\|\widehat z-z_\star\|_{L^2_{\rho_{\mathcal D}}}
+L_{\mathcal K}
\|\widehat\phi-\phi_\star^\mu\|_{L^2_{\rho_{\mathcal D}}}
\right].
\end{split}
\label{eq:integrated_deterministic_ppmd_error}
\end{equation}
\end{theorem}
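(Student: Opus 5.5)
The plan is a triangle-inequality decomposition through the discrete trajectory, followed by an additive splitting of the normalized reconstruction error. For fixed \(\mu\in\mathcal D\), we first write
\[
\mathcal S(\mu)-\widehat{\mathcal S}_{r,h,\tau}(\mu)
=\bigl[\mathcal S(\mu)-\mathcal E_{h,\tau}\mathbf u_{h,\tau}(\mu)\bigr]
+\mathcal E_{h,\tau}\bigl[\mathbf u_{h,\tau}(\mu)-\widehat{\mathbf u}_{r,h,\tau}(\mu)\bigr].
\]
The first bracket is controlled by \eqref{eq:uniform_discretization_error}. The second is treated with the isometry \(\|\mathcal E_{h,\tau}a\|_{\mathcal X_T}=\|a\|_{h,\tau}\). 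Since \(\mathcal N^{-1}\) is affine with linear part \(B^{-1}\), we have \(\mathbf u_{h,\tau}-\widehat{\mathbf u}_{r,h,\tau}=B^{-1}(\bar{\mathbf u}_{h,\tau}-\widehat{\bar{\mathbf u}}_{r,h,\tau})\). The assumed bound on \(B^{-1}\) then reduces the problem to bounding the normalized error \(\|\bar{\mathbf u}_{h,\tau}(\mu)-\widehat{\bar{\mathbf u}}_{r,h,\tau}(\mu)\|_{h,\tau}\), with a factor \(L_{\mathcal N^{-1}}\). I read that bound as applying to the \(\|\cdot\|_{h,\tau}\) norm on both sides.

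Next we use the exact decomposition \eqref{eq:exact_weighted_decomposition} together with the alignment identity \eqref{eq:alignment_preserves_reconstruction}, which gives \(\widehat{\bar{\mathbf u}}_{r,h,\tau}=\Psi_r\widehat z+\widehat{\mathcal K}(\widehat\phi)\). The normalized error then splits as
\[
\Psi_r(z_\star-\widehat z)
+\bigl[\mathbf r_\star-\mathcal K_\star(\phi_\star^\mu)\bigr]
+\bigl[\mathcal K_\star(\phi_\star^\mu)-\mathcal K_\star(\widehat\phi)\bigr]
+\bigl[\mathcal K_\star(\widehat\phi)-\widehat{\mathcal K}(\widehat\phi)\bigr],
\]
with every quantity evaluated at \(\mu\). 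Each term is handled separately:
\begin{itemize}
\item The first term has norm \(\|\widehat z-z_\star\|_2\) by the weighted orthonormality \(\|\Psi_r a\|_{h,\tau}=\|a\|_2\).
\item The second term is exactly \(E_{\rm m}(\mu)\) by \eqref{eq:out_of_sample_modeling_defect}.
\item The third term is at most \(L_{\mathcal K}\|\widehat\phi-\phi_\star^\mu\|_2\) by \eqref{eq:reference_lifting_lipschitz}. This requires \(\widehat\phi(\mu)\) and \(\phi_\star^\mu(\mu)\) to lie in \(\mathcal Z\), which is the standing assumption on aligned learned coordinates in Section~\ref{subsec:residual_lifting_error}.
\end{itemize}

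The fourth term is the step needing care: converting the RKHS bound \eqref{eq:residual_lifting_error} into a pointwise bound in \(\mathbb R^N_{h,\tau}\). The plan is to use the reproducing property of the separable Hilbert-valued RKHS. For \(F\in\mathcal H_{\mathcal K}\), \(y\in\mathcal Z\) and \(v\in\mathbb R^N_{h,\tau}\),
\[
\langle F(y),v\rangle_{h,\tau}=\langle F,k_{\mathcal K}(\cdot,y)\otimes v\rangle_{\mathcal H_{\mathcal K}},
\]
so \(\|F(y)\|_{h,\tau}\le k_{\mathcal K}(y,y)^{1/2}\|F\|_{\mathcal H_{\mathcal K}}\le\kappa_{\mathcal K}\|F\|_{\mathcal H_{\mathcal K}}\) by \eqref{eq:lifting_kernel_bound}. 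Applying this with \(F=\widehat{\mathcal K}-\mathcal K_\star\) and \(y=\widehat\phi(\mu)\in\mathcal Z\) gives the bound \(\kappa_{\mathcal K}E_{\mathcal K}\) on any realization where \eqref{eq:residual_lifting_error} holds. This also uses that \(\mathcal K_\star\in\mathcal H_{\mathcal K}\), which follows from the source condition \eqref{eq:lifting_source_condition}.

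Summing the four bounds and multiplying by \(L_{\mathcal N^{-1}}\) proves \eqref{eq:deterministic_ppmd_error}.

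For \eqref{eq:integrated_deterministic_ppmd_error}, we take the \(L^2_{\rho_{\mathcal D}}(\mathcal D;\mathcal X_T)\) norm of both sides of the pointwise bound and apply Minkowski's inequality, using that \(\rho_{\mathcal D}\) is a probability measure. The constant terms \(E_{\rm d}(h,\tau)\) and \(\kappa_{\mathcal K}E_{\mathcal K}\) pass through unchanged. The \(\mu\)-dependent terms become \(E_{{\rm m},\rho}\), \(\|\widehat z-z_\star\|_{L^2_{\rho_{\mathcal D}}}\) and \(\|\widehat\phi-\phi_\star^\mu\|_{L^2_{\rho_{\mathcal D}}}\). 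For these norms to make sense we need measurability of the integrands in \(\mu\), which holds since all maps involved are continuous or measurable by construction.
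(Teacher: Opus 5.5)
Your proposal is correct and follows essentially the same route as the paper's proof: the triangle inequality through $\mathcal E_{h,\tau}\mathbf u_{h,\tau}(\mu)$, the isometry of $\mathcal E_{h,\tau}$ combined with the bound on $B^{-1}$, the same four-term splitting after inserting $\mathcal K_\star(\phi_\star^\mu)$ and $\mathcal K_\star(\widehat\phi)$, the RKHS evaluation bound giving $\kappa_{\mathcal K}E_{\mathcal K}$, and Minkowski's inequality for the integrated estimate. Your explicit remarks on membership in $\mathcal Z$, on $\mathcal K_\star\in\mathcal H_{\mathcal K}$, and on $\rho_{\mathcal D}$ being a probability measure spell out points that the paper leaves implicit.
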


\begin{proof}
Fix \(\mu\in\mathcal D\). Adding and subtracting the reconstructed
fully discrete solution and using \eqref{eq:uniform_discretization_error}
gives
\begin{align*}
\|\mathcal S(\mu)-\widehat{\mathcal S}_{r,h,\tau}(\mu)\|_{\mathcal X_T}
&\le E_{\rm d}(h,\tau)
+\|\mathcal E_{h,\tau}
(\mathbf u_{h,\tau}-\widehat{\mathbf u}_{r,h,\tau})\|_{\mathcal X_T}\\
&\le E_{\rm d}(h,\tau)
+L_{\mathcal N^{-1}}
\|\bar{\mathbf u}_{h,\tau}
-\widehat{\bar{\mathbf u}}_{r,h,\tau}\|_{h,\tau}.
\end{align*}
Here, the second inequality follows from the isometry of
\(\mathcal E_{h,\tau}\) and
\[
\mathbf u_{h,\tau}-\widehat{\mathbf u}_{r,h,\tau}
=
B^{-1}\bigl(
\bar{\mathbf u}_{h,\tau}
-\widehat{\bar{\mathbf u}}_{r,h,\tau}
\bigr),
\]
where the affine shifts cancel.

Using the exact decomposition
\(\bar{\mathbf u}_{h,\tau}=\Psi_rz_\star+\mathbf r_\star\),
the aligned reconstruction
\(\widehat{\bar{\mathbf u}}_{r,h,\tau}
=\Psi_r\widehat z+\widehat{\mathcal K}(\widehat\phi)\), and inserting
\(\mathcal K_\star(\phi_\star^\mu)\) and
\(\mathcal K_\star(\widehat\phi)\), we obtain
\begin{align*}
\|\bar{\mathbf u}_{h,\tau}
-\widehat{\bar{\mathbf u}}_{r,h,\tau}\|_{h,\tau}
\le{}&
\|\Psi_r(z_\star-\widehat z)\|_{h,\tau}
+\|\mathbf r_\star-\mathcal K_\star(\phi_\star^\mu)\|_{h,\tau}\\
&+\|\mathcal K_\star(\phi_\star^\mu)
-\mathcal K_\star(\widehat\phi)\|_{h,\tau}
+\|(\mathcal K_\star-\widehat{\mathcal K})(\widehat\phi)\|_{h,\tau}\\
\le{}&
\|z_\star-\widehat z\|_2+E_{\rm m}(\mu)
+L_{\mathcal K}\|\phi_\star^\mu-\widehat\phi\|_2
+\kappa_{\mathcal K}E_{\mathcal K}.
\end{align*}
Indeed, the last line uses the weighted orthonormality of \(\Psi_r\), the
definition of \(E_{\rm m}\), the Lipschitz continuity of
\(\mathcal K_\star\), and the RKHS evaluation bound
\[
\|(\mathcal K_\star-\widehat{\mathcal K})(\widehat\phi)\|_{h,\tau}
\le
\sqrt{k_{\mathcal K}(\widehat\phi,\widehat\phi)}
\|\mathcal K_\star-\widehat{\mathcal K}\|_{\mathcal H_{\mathcal K}}
\le\kappa_{\mathcal K}E_{\mathcal K}.
\]
Substitution proves \eqref{eq:deterministic_ppmd_error}. Finally, taking the
\(L^2_{\rho_{\mathcal D}}(\mathcal D)\) norm and applying Minkowski's
inequality, together with
\(\|E_{\rm m}\|_{L^2_{\rho_{\mathcal D}}}=E_{{\rm m},\rho}\), yields
\eqref{eq:integrated_deterministic_ppmd_error}.
\end{proof}

\begin{theorem}[High-probability PPMD error]
\label{thm:main_ppmd_probabilistic_error}
For the direct-KRR variant, with probability at least $1-\delta$, $\delta
:=
\delta_{\rm op}+\delta_{\rm sp}+\delta_z+\delta_\phi$,
\begin{equation}
\begin{split}
\left\|
\mathcal S-\widehat{\mathcal S}_{r,h,\tau}
\right\|_{L^2_{\rho_{\mathcal D}}(\mathcal D;\mathcal X_T)}
\le{}&
E_{\rm d}(h,\tau)\\
+
L_{\mathcal N^{-1}}&
\left[
E_{{\rm m},\rho}
+\kappa_{\mathcal K}E_{\mathcal K}
+E_{\mu,z}(\delta_z)
+L_{\mathcal K}E_{\mu,\phi}(\delta_\phi)
\right].
\end{split}
\label{eq:direct_krr_high_probability_ppmd_error}
\end{equation}
For the continuation and smoothing spline construction, suppose that the
continuation approximation and spline stability assumptions hold for every admissible training realization. Then, with probability at least
\(1-\delta_{\rm op}-\delta_{\rm sp}\),
\begin{equation}
\begin{split}
\left\|
\mathcal S-\widehat{\mathcal S}_{r,h,\tau}
\right\|_{L^2_{\rho_{\mathcal D}}(\mathcal D;\mathcal X_T)}
\le{}&
E_{\rm d}(h,\tau)\\
&+
L_{\mathcal N^{-1}}
\left[
E_{{\rm m},\rho}
+\kappa_{\mathcal K}E_{\mathcal K}
+E_{z,\rho}^{\rm s}
+L_{\mathcal K}E_{\phi,\rho}^{\rm s}
\right].
\end{split}
\label{eq:spline_high_probability_ppmd_error}
\end{equation}
\end{theorem}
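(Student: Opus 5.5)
The plan is to reduce both statements to the integrated deterministic bound \eqref{eq:integrated_deterministic_ppmd_error} of Theorem~\ref{thm:deterministic_ppmd_error}. The only work is to build a single event of controlled probability on which every ingredient entering the right-hand side holds simultaneously. The inputs are \eqref{eq:residual_lifting_error} and the two latent bounds. The argument is a union bound, and I do not assume any independence between the events, since the basis, graph, and regression are all learned from the same sample.

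\textbf{Direct-KRR variant.} Fix $\delta_{\rm op},\delta_{\rm sp},\delta_z,\delta_\phi$ and set
\[
\mathcal A:=\mathcal A_{\rm op}\cap\mathcal A_{\rm sp}\cap\mathcal A_{\mu,z}\cap\mathcal A_{\mu,\phi}.
\]
By Assumptions~\ref{ass:residual_spectral_approximation}, \ref{ass:uniform_eigenfunction_approximation} and \ref{ass:parameter_latent_krr_oracle}, subadditivity gives $\mathbb P(\mathcal A^c)\le\delta$. On $\mathcal A_{\rm op}\cap\mathcal A_{\rm sp}$, Proposition~\ref{prop:uniform_residual_coordinate_error} yields the coordinate bound $E_{\rm b}$, and Theorem~\ref{thm:hilbert_lifting_error} then yields \eqref{eq:residual_lifting_error}. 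Hence the realization is admissible for Theorem~\ref{thm:deterministic_ppmd_error}. On $\mathcal A_{\mu,z}\cap\mathcal A_{\mu,\phi}$, Theorem~\ref{thm:parameter_latent_krr_error} bounds $\|\widehat z-z_\star\|_{L^2_{\rho_{\mathcal D}}}$ and $\|\widehat\phi-\phi_\star^\mu\|_{L^2_{\rho_{\mathcal D}}}$ by $E_{\mu,z}(\delta_z)$ and $E_{\mu,\phi}(\delta_\phi)$. Here $\widehat\phi=J_\Phi\widehat\phi_{\rm p}$ is exactly the aligned quantity $\widehat\ell$ for $\ell=\phi$. Substituting these into \eqref{eq:integrated_deterministic_ppmd_error} gives \eqref{eq:direct_krr_high_probability_ppmd_error}.

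One point to check is that $E_{\mu,\phi}$ contains $E_{{\rm obs},\phi}$. This term is controlled by $E_{\rm b}$ only on $\mathcal A_{\rm op}\cap\mathcal A_{\rm sp}$, but the bound is stated in terms of $E_{\mu,\phi}$ itself, so the intersection suffices.

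\textbf{Continuation and spline variant.} Here only $\mathcal A_{\rm op}\cap\mathcal A_{\rm sp}$ is needed, which has failure probability at most $\delta_{\rm op}+\delta_{\rm sp}$. On this event:
\begin{enumerate}
\item \eqref{eq:residual_lifting_error} holds as before.
\item The aligned labels $\widetilde y_i^{\,\ell,{\rm al}}$ have errors $e_i^\ell$. For $i\le n_s$ these satisfy \eqref{eq:high_fidelity_embedding_contribution}, using $E_{\rm b}$. For $i>n_s$ they are controlled through \eqref{eq:continuation_recursion}.
\item The continuation and spline stability hypotheses are assumed for every admissible realization. Lemma~\ref{lem:inline_spline_latent_error} and \eqref{eq:spline_latent_rho_error} therefore give $\|\widehat\ell_{\rm s}-\ell_\star\|_{L^2_{\rho_{\mathcal D}}}\le E^{\rm s}_{\ell,\rho}$ deterministically on the event.
\item Orthogonal invariance of the spline gives $\widehat\phi=J_\Phi\widehat\phi_{\rm p}=\widehat\ell_{\rm s}$ for $\ell=\phi$, and $\widehat z=\widehat\ell_{\rm s}$ for $\ell=z$.
\end{enumerate}
Plugging these bounds into \eqref{eq:integrated_deterministic_ppmd_error} gives \eqref{eq:spline_high_probability_ppmd_error}.

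\textbf{Main obstacle.} The delicate step is measure-theoretic rather than analytic. We must ensure that the aligned learned maps and the sign matrix $J_\Phi$, which Assumption~\ref{ass:uniform_eigenfunction_approximation} defines only on $\mathcal A_{\rm sp}$, are used consistently across every invoked result. For this I would fix the signs $s_\ell$ on $\mathcal A_{\rm sp}$ once and for all, and check that each cited bound uses that same alignment. The union bound then needs no independence. When the normalization is learned, all probabilities should be read under the conditional law, as stipulated at the start of Section~\ref{sec:ppmd_error_analysis}.
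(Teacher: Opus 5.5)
Your proposal is correct and follows the paper's proof essentially verbatim. You intersect $\mathcal A_{\rm op}\cap\mathcal A_{\rm sp}\cap\mathcal A_{\mu,z}\cap\mathcal A_{\mu,\phi}$ (resp.\ only $\mathcal A_{\rm op}\cap\mathcal A_{\rm sp}$ for the spline branch) and apply the union bound without independence. You then substitute the lifting bound of Theorem~\ref{thm:hilbert_lifting_error} and the latent bounds of Theorem~\ref{thm:parameter_latent_krr_error} or \eqref{eq:spline_latent_rho_error} into \eqref{eq:integrated_deterministic_ppmd_error}. Your extra remarks on $E_{{\rm obs},\phi}$ and on fixing the sign alignment $J_\Phi$ consistently on $\mathcal A_{\rm sp}$ are sound refinements that the paper leaves implicit.
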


\begin{proof}
For the direct-KRR branch, define
\[
\mathcal A_{\rm dir}
:=
\mathcal A_{\rm op}\cap\mathcal A_{\rm sp}
\cap\mathcal A_{\mu,z}\cap\mathcal A_{\mu,\phi}.
\]
On \(\mathcal A_{\rm op}\cap\mathcal A_{\rm sp}\),
Proposition~\ref{prop:uniform_residual_coordinate_error} controls
\(E_{\rm b}\), and Theorem~\ref{thm:hilbert_lifting_error} gives
\(\|\widehat{\mathcal K}-\mathcal K_\star\|_{\mathcal H_{\mathcal K}}
\le E_{\mathcal K}\). On
\(\mathcal A_{\mu,z}\cap\mathcal A_{\mu,\phi}\),
Theorem~\ref{thm:parameter_latent_krr_error} gives
\[
\|\widehat z-z_\star\|_{L^2_{\rho_{\mathcal D}}}
\le E_{\mu,z}(\delta_z),
\qquad
\|\widehat\phi-\phi_\star^\mu\|_{L^2_{\rho_{\mathcal D}}}
\le E_{\mu,\phi}(\delta_\phi).
\]
Substitution into \eqref{eq:integrated_deterministic_ppmd_error} proves
\eqref{eq:direct_krr_high_probability_ppmd_error}. Moreover, no independence is required, since the union bound gives
\[
\mathbb P(\mathcal A_{\rm dir}^c)
\le
\delta_{\rm op}+\delta_{\rm sp}+\delta_z+\delta_\phi
=\delta.
\]

For the construction based on continuation followed by smoothing spline
fitting, let \(\mathcal A_{\rm s}:=\mathcal A_{\rm op}\cap\mathcal A_{\rm sp}\). On this event, the same spectral and lifting bounds hold, while \eqref{eq:spline_latent_rho_error} gives
\[
\|\widehat z-z_\star\|_{L^2_{\rho_{\mathcal D}}}
\le E_{z,\rho}^{\rm s},
\qquad
\|\widehat\phi-\phi_\star^\mu\|_{L^2_{\rho_{\mathcal D}}}
\le E_{\phi,\rho}^{\rm s}.
\]
Inserting these estimates into
\eqref{eq:integrated_deterministic_ppmd_error} proves
\eqref{eq:spline_high_probability_ppmd_error} and
\(\mathbb P(\mathcal A_{\rm s}^c)
\le\delta_{\rm op}+\delta_{\rm sp}\).
\end{proof}

If the continuation or spline estimates hold only on events of high
probability, their failure probabilities must also be included in the
corresponding union bound.

\subsection{Consistency}
\label{subsec:ppmd_consistency}

\begin{theorem}[Consistency]
\label{thm:ppmd_consistency}
Consider a sequence indexed by \(n\), with
\(h_n,\tau_n\to0\) and \(n_s\to\infty\).  Assume that the geometric,
spectral, source, discretization, and stability hypotheses above hold along the sequence, and that, for every fixed choice of confidence levels,
\begin{equation}
E_{{\rm d},n}\to0,
\quad
\overline E_{{\rm m},n}\to0,
\quad
B_{\mathcal K,n}\lambda_{\mathcal K,n}^{\beta_{\mathcal K,n}}\to0,
\quad
\frac{\overline E_{{\rm l},n}}
{\sqrt{\lambda_{\mathcal K,n}}}\to0,
\quad
\frac{E_{{\rm b},n}}{\lambda_{\mathcal K,n}^2}\to0,
\label{eq:common_consistency_conditions}
\end{equation}
where, almost surely,
\[
E_{{\rm m},\rho,n}\le\overline E_{{\rm m},n},
\qquad
E_{{\rm l},n}\le\overline E_{{\rm l},n}.
\]
Assume also that \(0<\lambda_{\mathcal K,n}\le1\) eventually holds, and that all multiplicative quantities appearing in the preceding error bounds, including \(L_{\mathcal N^{-1}}\), \(\kappa_{\mathcal K}\), \(L_{\mathcal K}\), \(L_{\rm f}\), \(R_{m}\), and \(\sigma_\ell\), remain uniformly bounded along the approximation sequence.

For the direct-KRR variant, assume in addition, for
\(\ell\in\{z,\phi\}\),
\begin{equation}
B_{\ell,n}\lambda_{\mu,\ell,n}^{\beta_{\ell,n}}\to0,
\qquad
\frac{
\mathcal N_{\mu,\ell,n}(\lambda_{\mu,\ell,n})
+\log(2/\delta_\ell)}
{n_s\lambda_{\mu,\ell,n}}
\to0,
\qquad
\frac{E_{{\rm obs},\ell,n}}
{\sqrt{\lambda_{\mu,\ell,n}}}
\to0.
\label{eq:direct_krr_consistency_conditions}
\end{equation}
For the continuation-spline variant, replace
\eqref{eq:direct_krr_consistency_conditions} by
\begin{equation}
h_{\mu,n}^{m_\ell}
\|\ell_{\star,n}\|_{H^{m_\ell}}\to0,
\quad
\alpha_{\ell,n}^{1/2}
\|\ell_{\star,n}\|_{H^2}\to0,
\quad
E_{{\rm d},\ell,n}^{\rm s}\xrightarrow{a.s.}0,
\quad
\ell\in\{z,\phi\},
\label{eq:spline_consistency_conditions}
\end{equation}
with the continuation errors controlled, for example, by one of the
sufficient conditions following \eqref{eq:continuation_recursion}.  Then,
for either branch,
\begin{equation}
\left\|
\mathcal S-\widehat{\mathcal S}_{r,h_n,\tau_n}
\right\|_{L^2_{\rho_{\mathcal D}}(\mathcal D;\mathcal X_T)}
\xrightarrow{\mathbb P}0.
\label{eq:ppmd_consistency_conclusion}
\end{equation}
\end{theorem}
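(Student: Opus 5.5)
The plan is to reduce the consistency statement to the high-probability bounds of Theorem~\ref{thm:main_ppmd_probabilistic_error} and then use a standard ``for every $\epsilon>0$ and every $\delta>0$'' argument. Fix $\epsilon>0$ and a target failure level $\delta>0$. Choose confidence levels so that $\delta_{\rm op}+\delta_{\rm sp}+\delta_z+\delta_\phi\le\delta/2$ (only $\delta_{\rm op},\delta_{\rm sp}$ for the spline branch). With these levels held fixed along the sequence, the events $\mathcal A_{\rm dir}$ or $\mathcal A_{\rm s}$ have probability at least $1-\delta/2$ for every $n$. On them, \eqref{eq:direct_krr_high_probability_ppmd_error} or \eqref{eq:spline_high_probability_ppmd_error} holds. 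It then suffices to show that the deterministic right-hand side, with $E_{{\rm m},\rho,n}$ and $E_{{\rm l},n}$ replaced by their almost-sure majorants, tends to zero. In the spline branch, the data term tends to zero almost surely, hence in probability.

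The term-by-term verification proceeds as follows. First, $E_{{\rm d},n}\to0$ and $E_{{\rm m},\rho,n}\le\overline E_{{\rm m},n}\to0$. Second, for $E_{\mathcal K}$ in \eqref{eq:explicit_lifting_error}, the oracle part tends to zero by $B_{\mathcal K,n}\lambda_{\mathcal K,n}^{\beta_{\mathcal K,n}}\to0$ and $\overline E_{{\rm l},n}/\sqrt{\lambda_{\mathcal K,n}}\to0$. For the perturbation part, use $0<\lambda_{\mathcal K,n}\le1$ eventually, which gives $E_{\rm b}/\lambda_{\mathcal K}\le E_{\rm b}/\lambda_{\mathcal K}^2\to0$. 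Together with bounded $R_m,L_{\rm f},\kappa_{\mathcal K}$, this yields $\kappa_{\mathcal K}E_{\mathcal K,n}\to0$. Third, in the direct-KRR branch, $E_{\mu,\ell}(\delta_\ell)$ in \eqref{eq:parameter_latent_oracle} is a sum of three terms. Each vanishes by \eqref{eq:direct_krr_consistency_conditions} and bounded $\sigma_\ell$; for $\ell=\phi$, note that $E_{{\rm obs},\phi}\le E_{\rm b}$. In the spline branch, $E^{\rm s}_{\ell,\rho}$ from \eqref{eq:spline_parameter_measure_error} vanishes by \eqref{eq:spline_consistency_conditions}, provided $c_\rho$, $C_{{\rm a},\ell}$ and $C_{{\rm t},\ell}$ are bounded (they are among the multiplicative quantities). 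The continuation errors entering $E^{\rm s}_{{\rm d},\ell}$ are handled by \eqref{eq:continuation_recursion} under the stated sufficient conditions. Multiplying by bounded $L_{\mathcal N^{-1}},L_{\mathcal K}$ shows the whole bound is at most some $b_n$ (deterministic, or random with $b_n\to0$ in probability) tending to zero.

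The conclusion then follows. For $n$ large, $\mathbb P(b_n>\epsilon)\le\delta/2$, so $\mathbb P(\|\mathcal S-\widehat{\mathcal S}\|>\epsilon)\le\mathbb P(\mathcal A^c)+\mathbb P(b_n>\epsilon)\le\delta$. Since $\epsilon,\delta$ are arbitrary, convergence in probability follows.

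The main obstacle is the dependence of the error quantities on the confidence levels. $E_{\rm b}$, $\mathfrak E_\vartheta$, $\mathfrak E_{\psi,\infty}$ and the $\log(2/\delta_\ell)$ term all depend on the fixed $\delta$'s. This is exactly why the hypotheses are imposed ``for every fixed choice of confidence levels''. I must be careful to fix the levels before letting $n\to\infty$ rather than letting them vary with $n$. A secondary point is that the hypothesis $E_{{\rm b},n}/\lambda_{\mathcal K,n}^2\to0$ must also control the $E_{{\rm obs},\phi}$ and $E_{{\rm h},\phi}$ contributions. For $E_{{\rm h},\phi}$ this uses $n_s/N_{\rm e}^\phi\le1$.
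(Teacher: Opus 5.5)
Your proposal is correct and follows essentially the same route as the paper: you fix the confidence levels before letting $n\to\infty$, invoke the high-probability bounds of Theorem~\ref{thm:main_ppmd_probabilistic_error}, and check term by term that the right-hand side vanishes, using $\lambda_{\mathcal K,n}\le1$ to control $E_{{\rm b},n}/\lambda_{\mathcal K,n}$ by $E_{{\rm b},n}/\lambda_{\mathcal K,n}^2$. Your handling of the almost-sure spline data term through $\mathbb P(b_n>\epsilon)\le\delta/2$ is slightly more careful than the paper's, but your closing remark is superfluous, since $E_{{\rm obs},\phi,n}/\sqrt{\lambda_{\mu,\phi,n}}\to0$ and $E^{\rm s}_{{\rm d},\phi,n}\to0$ are assumed directly (and $E_{{\rm b},n}/\lambda_{\mathcal K,n}^2\to0$ could not control the former anyway, because it involves a different regularization parameter).
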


\begin{proof}
Fix a tolerance \(\epsilon>0\) and an arbitrary probability level
\(\eta>0\). Choose fixed confidence parameters whose sum is smaller than
\(\eta\). Under \eqref{eq:common_consistency_conditions}, every term in
\eqref{eq:explicit_lifting_error} tends to zero; the condition
\(E_{{\rm b},n}/\lambda_{\mathcal K,n}^2\to0\) also controls
\(E_{{\rm b},n}/\lambda_{\mathcal K,n}\) because
\(\lambda_{\mathcal K,n}\le1\) eventually. The assumptions associated with each construction imply that the corresponding errors in the latent coordinates converge to zero. Consequently, for all sufficiently large \(n\), the right side of either \eqref{eq:direct_krr_high_probability_ppmd_error} or
\eqref{eq:spline_high_probability_ppmd_error} is smaller than
\(\epsilon\) outside of an event whose probability is at most \(\eta\).
Since \(\eta\) is arbitrary, \eqref{eq:ppmd_consistency_conclusion} follows.
\end{proof}

\section{Numerical results for flow past a cylinder}
\label{sec:num_cylinder}

We consider incompressible flow past a cylinder. 
\[
\Omega =
(0,1.8)\times(0,0.41)
\setminus
\overline{B_{0.05}(0.2,0.2)}. 
\]
The density, reference inflow velocity, and cylinder diameter are
\(\rho=1\), \(U=1\), and \(D=0.1\), respectively. The dynamic viscosity
\(\mu\) is the model parameter, with $\operatorname{Re}(\mu)
=
\frac{\rho U D}{\mu}
=
\frac{0.1}{\mu}$. We use $\mu_i
=
2.0\times10^{-5}
+
(i-1)\times10^{-5}, i=1,\ldots,100$, corresponding approximately to \(99\le\operatorname{Re}\le5000\).

The inlet velocity is \((1,0)\), homogeneous no-slip conditions are imposed on the walls and cylinder, and the initial velocity is \((0.1,0.1)\). The full-order problem is discretized using a mesh with 3802 nodes. The solution is computed on \([0,15]\) with \(\Delta t=0.1\). Each trajectory contains 150 states at \(t_n=n\Delta t\) and \(n=1,\ldots,150\); the common initial state is omitted.

The first 90 parameters, \(2.0\times10^{-5}\le\mu\le9.1\times10^{-4}\), are used for training, and the remaining ten, \(9.2\times10^{-4}\le\mu\le1.01\times10^{-3}\), form an extrapolation set.
No extrapolation trajectory is used in normalization, dimension reduction, graph construction, regression, lifting, or hyperparameter selection.

PPMD is compared with two nonintrusive baselines, Proper Orthogonal Decomposition (POD) + KRR and POD + Autoencoder (AE) + KRR. For POD+KRR, the normalized trajectories are projected onto a
12-dimensional weighted POD space, and the corresponding POD coefficients
are predicted directly from \(\mu\) by KRR. The reconstructed trajectory is obtained by combining the predicted coefficients with the fixed POD basis.

POD+AE+KRR uses an eight-dimensional weighted POD representation for the
dominant linear component. An autoencoder is trained on the corresponding
POD residuals to obtain four nonlinear latent coordinates, and its decoder maps these coordinates back to the residual trajectory space. KRR is used to predict both the eight POD coefficients and the four autoencoder coordinates from \(\mu\). The decoded residual is then added to the POD reconstruction.

All three methods, therefore, use a total latent dimension of 12. PPMD and
POD+AE+KRR each use eight linear and four nonlinear coordinates, whereas
POD + KRR uses 12 linear coordinates. For PPMD, both the linear coordinates
and the residual spectral coordinates are predicted directly from \(\mu\)
by KRR, rather than by recursive continuation. All methods use the same training and extrapolation sets, normalization procedure, weighted trajectory norm, and procedure for selecting hyperparameters without using extrapolation data.

For any parameter \(\mu\), the relative trajectory error is
\begin{equation}
e_{h,\tau}(\mu)
:=
\frac{
\|
\mathbf u_{h,\tau}(\mu)
-
\widehat{\mathbf u}_{h,\tau}(\mu)
\|_{h,\tau}
}{
\|
\mathbf u_{h,\tau}(\mu)
\|_{h,\tau}
}.
\label{eq:cylinder_relative_trajectory_error}
\end{equation}
This norm incorporates the spatial discretization matrix and time quadrature
weights and is therefore distinct from the unweighted Euclidean norm of the
stacked coefficient vector. All results concern complete trajectories on the
fixed interval \([0,15]\), rather than autonomous time advancement beyond
that interval.

\subsection{Reconstruction and extrapolation results}
\label{subsec:cylinder_prediction_results}

Figure~\ref{fig:cylinder_training_extrapolation} separates reconstruction at
the training parameters from prediction outside the training interval.

\begin{figure}[t]
\centering
\begin{minipage}[t]{0.49\textwidth}
\centering
\includegraphics[width=\linewidth]
{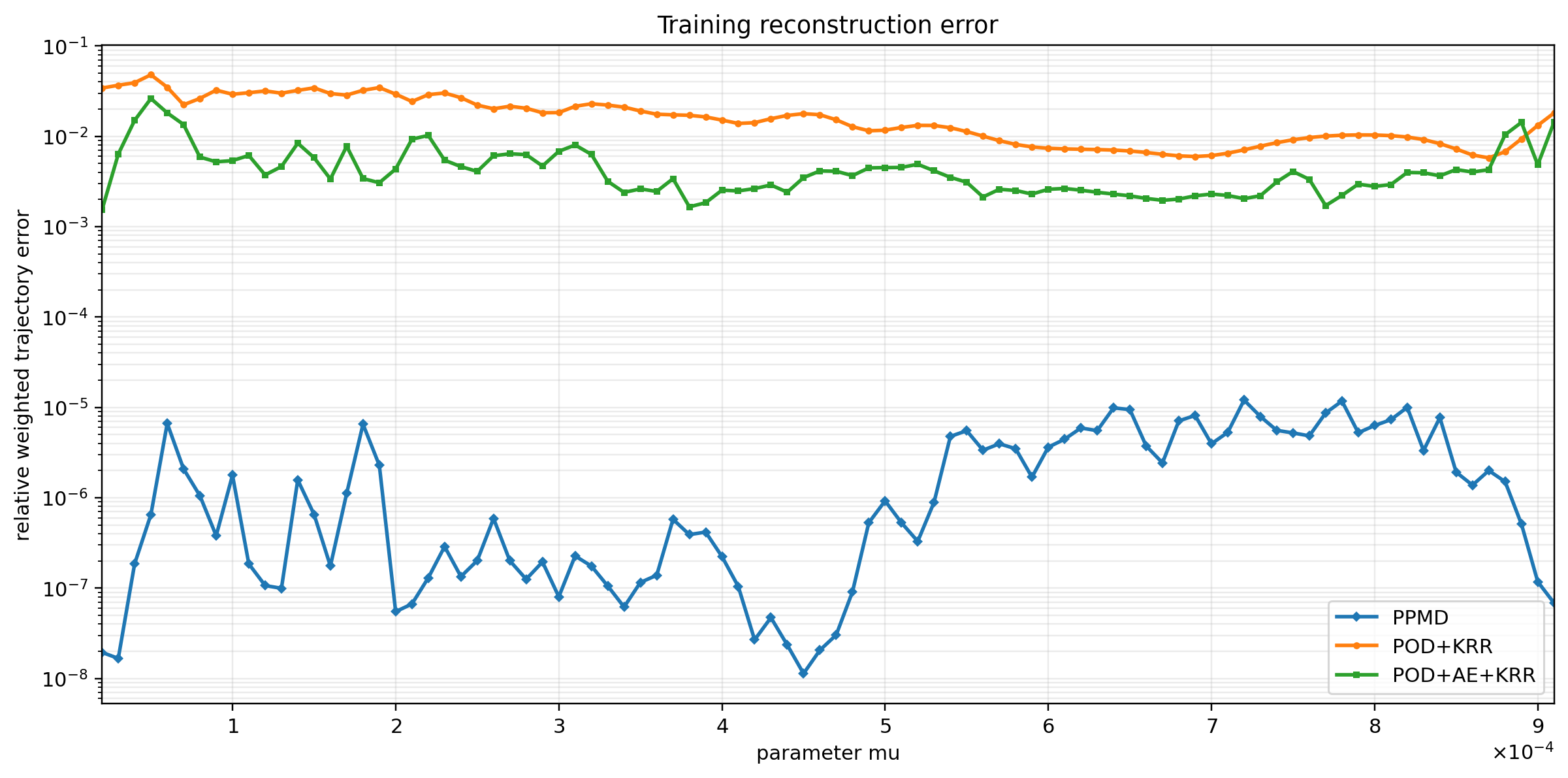}

{\small (a) Training reconstruction}
\end{minipage}
\hfill
\begin{minipage}[t]{0.49\textwidth}
\centering
\includegraphics[width=\linewidth]
{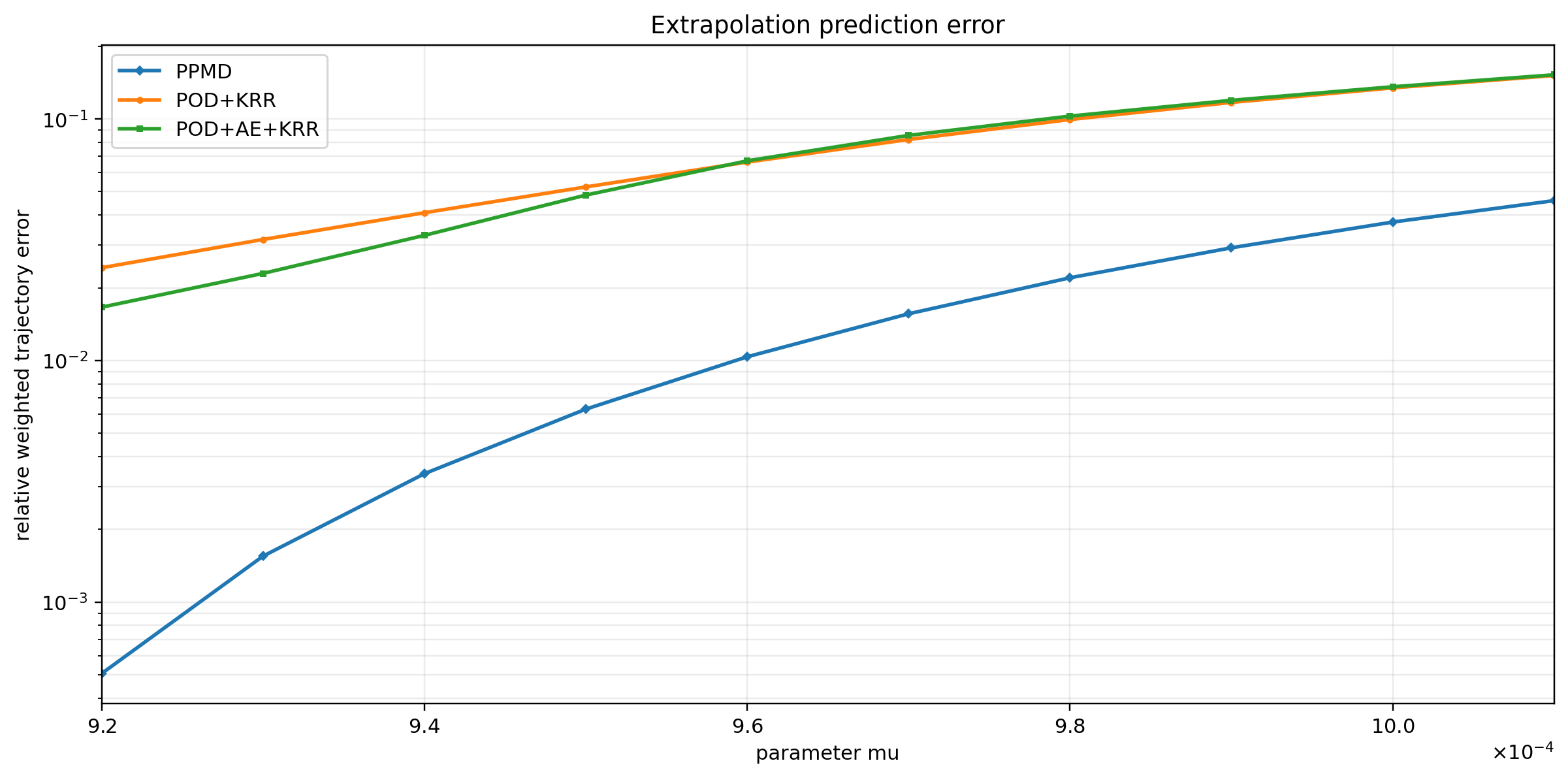}

{\small (b) Extrapolation prediction}
\end{minipage}
\caption{
Relative weighted trajectory errors for PPMD, POD+KRR, and POD+AE+KRR.
Panel (a) reports reconstruction at the 90 training parameters, and panel
(b) reports prediction at the ten extrapolation parameters. All methods use
12 latent coordinates; PPMD and POD+AE+KRR use eight linear and four
nonlinear coordinates.
}
\label{fig:cylinder_training_extrapolation}
\end{figure}

PPMD reconstructs the training trajectories with errors between
approximately \(10^{-8}\) and \(10^{-5}\), whereas the two comparison methods
have substantially larger training errors. This demonstrates the
representation capacity of PPMD but is not, by itself, evidence of
generalization.

On the extrapolation set, PPMD gives the smallest error at all ten parameters.
Its error increases from approximately \(5\times10^{-4}\) near the training
boundary to \(4.6\times10^{-2}\) at the most distant parameter. In
comparison, POD+KRR and POD+AE+KRR have errors of order \(10^{-2}\) near the
boundary and approximately \(1.5\times10^{-1}\) at the farthest parameter.
Thus PPMD is more accurate throughout the extrapolation interval, although
its error also increases with the distance from the training set.

\subsection{Error decomposition study}
\label{subsec:cylinder_oracle_study}

To identify the dominant source of the PPMD extrapolation error, we perform
a reference coordinate substitution study while keeping the basis, residual spectral representation, and lifting estimator fixed. Here and
throughout this subsection, \emph{coordinates} refers to the low-dimensional
vectors in the linear and residual spectral representation spaces, rather
than to the physical parameter \(\mu\). For a test trajectory, define
the reference linear coordinates by
\[
z_{\rm ref}(\mu)
:=
\Psi_r^\top
W_{h,\tau}
\bar{\mathbf u}_{h,\tau}(\mu),
\]
and let \(\phi_{\rm ref}(\mu)\) denote the reference nonlinear coordinates
expressed in the same spectral coordinate system used to train the lifting
map. Both \(z_{\rm ref}(\mu)\) and \(\phi_{\rm ref}(\mu)\) are obtained from
the high-fidelity test trajectories and are used only for diagnostic purposes.

The four normalized reconstructions are
\begin{align}
\widehat{\bar{\mathbf u}}^{\rm PPMD}
&=
\Psi_r\widehat z
+
\widehat{\mathcal K}(\widehat\phi),
&
\widehat{\bar{\mathbf u}}^{z{\rm -orc}}
&=
\Psi_r z_{\rm ref}
+
\widehat{\mathcal K}(\widehat\phi),
\nonumber\\
\widehat{\bar{\mathbf u}}^{\phi{\rm -orc}}
&=
\Psi_r\widehat z
+
\widehat{\mathcal K}(\phi_{\rm ref}),
&
\widehat{\bar{\mathbf u}}^{{\rm both-orc}}
&=
\Psi_r z_{\rm ref}
+
\widehat{\mathcal K}(\phi_{\rm ref}).
\label{eq:cylinder_oracle_reconstructions}
\end{align}
The physical-coordinate reconstructions are obtained by applying
\(\mathcal N^{-1}\).

\begin{figure}[t]
\centering
\includegraphics[width=0.82\textwidth]
{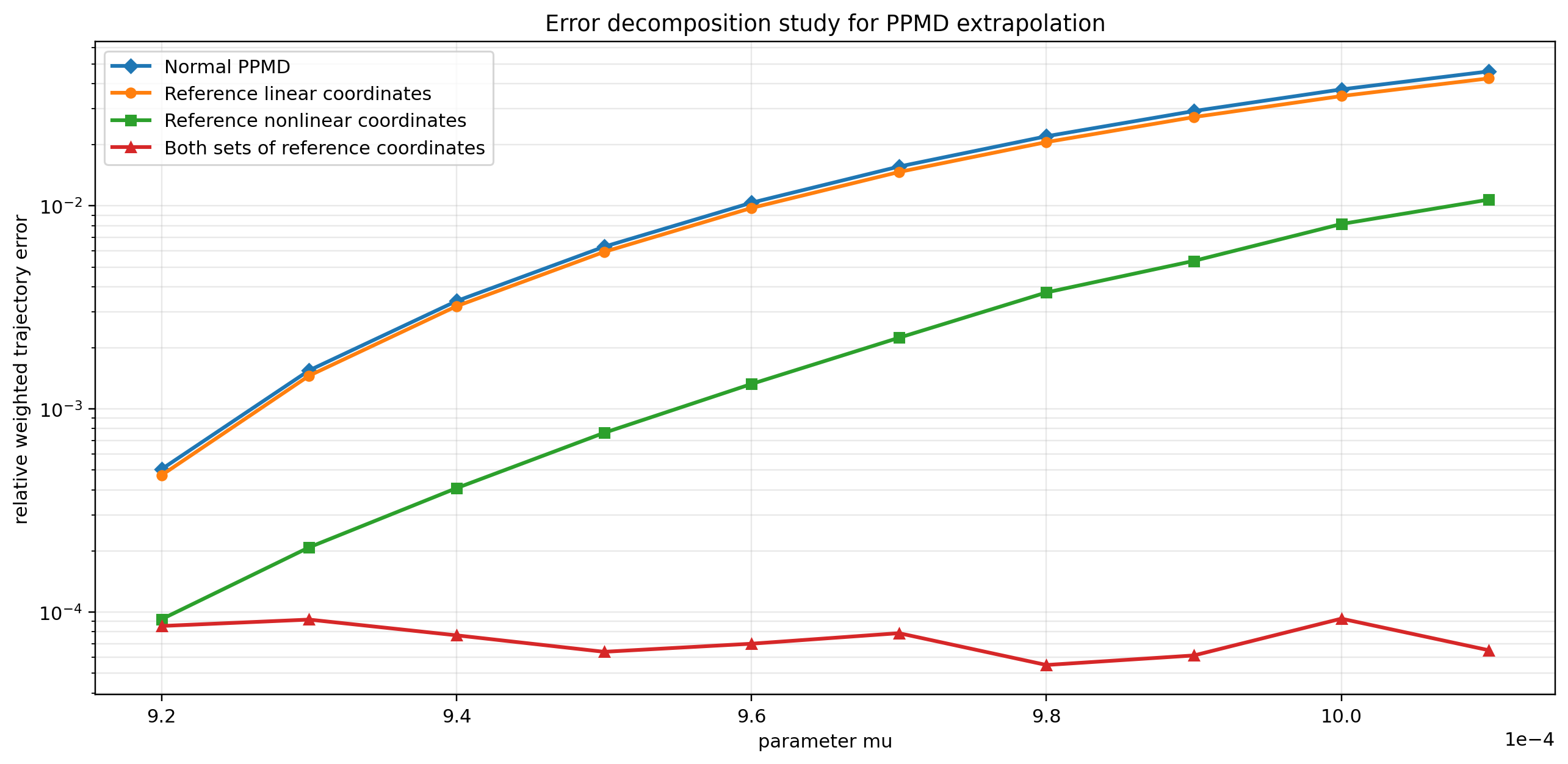}
\caption{
Error decomposition study for the ten extrapolation parameters.
The linear-coordinate substitution replaces only \(\widehat z\), the
nonlinear-coordinate substitution replaces only \(\widehat\phi\), and the
joint substitution replaces both coordinate vectors by their reference
values. All other PPMD components remain fixed.
}
\label{fig:cylinder_oracle_replacement}
\end{figure}

\begin{table}[t]
\centering
\caption{
Mean and maximum trajectory errors over the ten extrapolation parameters.
The final column gives the reduction in mean error relative to standard PPMD.
}
\label{tab:cylinder_oracle_summary}
\begin{tabular}{lccc}
\hline
Reconstruction
& Mean error
& Maximum error
& Reduction
\\
\hline
Standard PPMD
& \(1.7235\times10^{-2}\)
& \(4.5915\times10^{-2}\)
& \(0.00\%\)
\\
Reference linear coordinates
& \(1.6902\times10^{-2}\)
& \(4.4643\times10^{-2}\)
& \(1.93\%\)
\\
Reference nonlinear coordinates
& \(3.302\times10^{-3}\)
& \(1.0730\times10^{-2}\)
& \(80.84\%\)
\\
Both sets of reference coordinates
& \(7.4\times10^{-5}\)
& \(9.3\times10^{-5}\)
& \(99.57\%\)
\\
\hline
\end{tabular}
\end{table}

Replacing only the predicted linear coordinates reduces the mean error by
\(1.93\%\), indicating that the approximation of \(z(\mu)\) is not the
dominant limitation of the standard PPMD prediction. Replacing only the
predicted nonlinear coordinates reduces the mean error by \(80.84\%\),
identifying the approximation of the residual spectral coordinates
\(\phi(\mu)\) as the principal source of extrapolation error in this example.
When both coordinate vectors are replaced by their reference values, the mean
error decreases to \(7.4\times10^{-5}\). The remaining discrepancy includes
the lifting error, the finite-dimensional residual coordinate representation
error, and any error introduced when computing or aligning
\(\phi_{\rm ref}\).

Because the errors in the two coordinate vectors interact through the
reconstruction map, the reported reductions are not additive and should not
be interpreted as an exact decomposition of the total error. Rather, this
diagnostic study measures the sensitivity of the final trajectory
reconstruction to errors in the linear and residual spectral coordinates.
For the present example, the results show that improving the prediction of
the residual spectral coordinates is substantially more important than
further improving the prediction of the linear coordinates or reducing the
lifting error.

\section{Conclusions}
\label{sec:summary}

This work analyzed parametric probabilistic manifold decomposition (PPMD) as a nonintrusive method for approximating complete solution trajectories over a fixed time interval as functions of the model parameters. The method was formulated in a weighted discrete trajectory space whose norm incorporates the spatial discretization and time quadrature. The analysis separates the PDE discretization error from the errors associated with the linear coordinates, residual spectral coordinates, residual lifting, and finite residual representation. Under the stated geometric, spectral,
regression, and stability assumptions, these estimates yield deterministic and conditional high probability reconstruction bounds, together with consistency in probability under joint refinement.

For flow past a cylinder, PPMD produced smaller extrapolation errors than both POD+KRR and POD+AE+KRR under the same data split and latent dimension. The error decomposition study showed that the dominant error arose from predicting the residual spectral coordinates, whereas the linear coordinate and lifting errors were comparatively small. The present results remain conditional on the assumed regularity, spectral approximation, and regression estimates, and the reduced order model is restricted to the prescribed time interval rather than an autonomous time evolution. Future work will focus on more stable prediction
of the residual coordinates, adaptive sampling, higher dimensional parameter domains, and causal extensions of PPMD.

\section{ Acknowledgments}
The authors acknowledge the support of the Top Discipline Plan of Shanghai Universities-Class I and Shanghai Municipal Science and Technology Major Project (No. 2021SHZDZX0100), National Key $R\&D$ Program of China(NO.2022YFE0208000, 2024YFC2816400, and 2024YFC2816401) and the Shanghai Institute of Intelligent Science and Technology, Tongji University.	

\bibliographystyle{siamplain}
\bibliography{references}
\end{document}